\DeclareMathOperator{\dif}{d}
\newcommand{\Cal}{\mathcal{C}}
\newcommand{\ol}{\mathcal{O}}
\def \g{\gamma}
\def \G{\Gamma}
\def \l{\lambda}
\def \phi{\varphi}
\def \Phi{\varPhi}
\def \p{\pi}
\def \r{\rho}
\def \s{\sigma}
\def \t{\tau}
\def \R{\mathbb{R}}
\def \Hq{\mathbb{H}\,}
\def \C{\mathbb{C}\,}
\def\widecheckg{g^{\hspace*{-2.5pt}\vbox to 5pt{\hbox to
0pt{\LARGE$\check{}$}}}\hspace*{2pt}}
\def\widecheckl{\lambda^{\hspace*{-3.5pt}\vbox to 8pt{\hbox to
0pt{\LARGE$\check{}$}}}\hspace*{2pt}}
\begin{document}

\title{The Penrose transform in quaternionic geometry} 
\author{Radu Pantilie}
\email{\href{mailto:radu.pantilie@imar.ro}{radu.pantilie@imar.ro}}
\address{R.~Pantilie, Institutul de Matematic\u a ``Simion~Stoilow'' al Academiei Rom\^ane,
C.P. 1-764, 014700, Bucure\c sti, Rom\^ania}
\subjclass[2010]{Primary 53C28, Secondary 53C26}
\keywords{the Penrose transform, twistor theory, quaternionic geometry} 

\newtheorem{thm}{Theorem}[section]
\newtheorem{lem}[thm]{Lemma}
\newtheorem{cor}[thm]{Corollary}
\newtheorem{prop}[thm]{Proposition}

\theoremstyle{definition}

\newtheorem{defn}[thm]{Definition}
\newtheorem{rem}[thm]{Remark}
\newtheorem{exm}[thm]{Example}

\numberwithin{equation}{section}

\begin{abstract}
We study the Penrose transform for the `quaternionic objects' whose twistor spaces are complex manifolds 
endowed with locally complete families of embedded Riemann spheres with positive normal bundles.
\end{abstract} 

\maketitle
\thispagestyle{empty}

\vspace{-7mm}  

\section*{Introduction}

\indent 
The obvious aim, when trying to solve an equation, is to find all the solutions, explicitly. 
However, one is usually satisfied with a subclass of solutions and a procedure by which 
all the solutions can be obtained from those in the subclass.\\ 
\indent 
The (arche)typical example is given by the Laplacian on $\R^2(=\!\C)$, where any complex-valued harmonic 
function is, locally, the sum of a holomorphic and an anti-holomorphic function.\\ 
\indent 
On $\R^3$, the situation was settled, essentially, in \cite{Whi-1903}\,: for any complex-valued 
harmonic function $f$\,, locally defined on $\R^3$, there exists a family of maps 
$\phi_t:\R^3\to\C$, with $t\in I$\,, and $I\subseteq\R$ a closed interval, such that $\phi_t=\psi_t\circ\p_t$\,, with 
$\p_t:\R^3\to\C$ orthogonal projections, $\psi_t$ locally defined holomorphic functions on $\C$, and 
\begin{equation} \label{e:Penrose_prehistory}   
f=\int_I\phi_t\dif\!t\;. 
\end{equation} 
\indent 
On $\R^4$, with the Lorentzian signature, this was done by R.~Penrose (see \cite{Pen-69} and the references therein). 
Up to a complexification (in Twistor Theory, the objects are usually real-analytic) we may assume Riemannian signature.\\ 
\indent  
The procedure by which all of the harmonic functions on $\R^4$ can be found is given, again, by \eqref{e:Penrose_prehistory}\,, 
whilst the particular class of solutions is formed of all the functions $\phi$ for which there exists a (positive) K\"ahler structure on $\R^4$,  
with respect to which $\phi$ is holomorphic.\\ 
\indent 
The first example, above, admits the obvious higher dimensional generalization through the classical notion of pluriharmonic function, of complex analysis. 
Furthermore, the unicity is easy to deal with.\\ 
\indent 
On the other hand, neither the unicity nor the adequate level of generality can be easily described for the other two examples. 
Nevertheless, the former problem can be understood through the classical fact that the integral representations above can be obtained, through the \v Cech cohomology, 
from elements of $H^1\bigl(Z,\mathcal{L}^2\bigr)$\,, where $Z=\ol(2)$\,, for $\R^3$, $Z=\ol(1)\oplus\ol(1)$\,, for $\R^4$, 
and $\mathcal{L}=\p^*\bigl(\ol(-1)\bigr)$\,, with $\ol(k)$ the holomorphic line bundle of Chern number $k$ over the Riemann sphere, 
and $\p$ the bundle projection of $Z$. That is, \emph{the Penrose transform} establishes a correspondence between certain 
cohomology classes over the twistor space $Z$ of $M$ and the `pluriharmonic functions' on~$M$.\\ 
\indent 
In this paper, we study the Penrose transform in the case when, up to conjugations, the twistor space is a complex manifold endowed 
with a locally complete family of embedded Riemann spheres with positive normal bundles. On the differential geometric side,  
these correspond to what we call `quaternionic objects' (see Section \ref{section:q_objects}\,, below). This way, we retrieve, for example, 
the already known cases of (classical) quaternionic manifolds \cite{Bas-q_complexes}\,, and, in particular, the anti-self-dual manifolds \cite{Hit-80}\,.\\ 
\indent 
The fact that the parameter spaces of families of embedded Riemann spheres are, canonically, `quaternionic' has its roots in 
the fact that the quaternions arise from the Riemann sphere endowed with the antipodal map. This is explained in Section \ref{section:quaternions} 
which we believe will help the reader to understand the quaternionic objects and the corresponding Penrose transform.\\ 
\indent 
The relevant notion of `quaternionic pluriharmonicity' is constructed in Section \ref{section:q-ph}\,. This is done in two steps, 
by using the fact that any suitable holomorphic line bundle over the twistor space of $M$ corresponds to a 
`hypercomplex object' which is the total space of a principal bundle over $M$ (see Definition \ref{defn:hypercomplex_object}\,, 
and the comments before it).\\ 
\indent 
Finally, the quaternionic Penrose transform is discussed in Section \ref{section:qPt}\,. We believe that our approach is 
as explicit as possible, based on a cohomology result of \cite{EGW}\,, and in the particular case of anti-self-dual manifolds 
it, essentially, reduces to the approach of \cite{W'd'se-85} (see, also, \cite{Raw-79}\,).

\section{Where do the quaternions come from?} \label{section:quaternions} 

\indent 
The idea is that \emph{the quaternions arise from the Riemann sphere} (endowed with the antipodal map), as we shall now explain.\\  
\indent 
A \emph{Riemann sphere} is a compact Riemann surface $Y$, with zero first Betti number, 
endowed with an involutive antiholomorphic diffeomorphism $\s$ without fixed points. 
From the exact sequence of cohomology groups associated to the exponential sequence 
$\{0\}\longrightarrow\mathbb{Z}\longrightarrow\C\longrightarrow\C^{\!*}\longrightarrow\{1\}$ 
we deduce that $H^1(Y,\C^{\!*})=\mathbb{Z}$\,. In particular, each holomorphic line bundle over $Y$ is determined,  
up to isomorphisms, by its Chern number. Furthermore, each element of $H^1(Y,\C^{\!*})$ is determined by a divisor.   
Thus, on denoting by $\overline{Y}$ the Riemann surface with the same underlying conformal structure as $Y$ 
but with the opposite orientation, $\s$ induces an isomorphism $H^1(Y,\C^{\!*})=H^1(\overline{Y},\C^{\!*})$ which 
preserves the Chern numbers.\\ 
\indent 
On the other hand, if $\mathcal{L}$ is a holomorphic line bundle over $Y$ then its conjugate $\overline{\mathcal{L}}$ 
is a holomorphic line bundle over $\overline{Y}$. Consequently, $\mathcal{L}$ is isomorphic to $\s^*(\overline{\mathcal{L}})$\,. 
Equivalently, there exists an antiholomorphic diffeomorphism $\t:\mathcal{L}\to\mathcal{L}$\,, covering $\s$, which is 
complex-conjugate linear on each fibre. Moreover, any two such morphisms differ by the multiplication with a nonzero complex number.  
Furthermore, as $\t^2$ is an isomorphism of $\mathcal{L}$ (covering the identity map of $Y$) we have $\t^2=\l\,{\rm Id}_{\mathcal{L}}$\,, 
for some nonzero complex number $\l$\,. But $\t^2$ is, also, an isomorphism of $\overline{\mathcal{L}}$ which implies that $\l$ is real. 
Therefore, by suitably multiplying, if necessary, $\t$ with a complex number, we may assume $\t^2=\pm{\rm Id}_{\mathcal{L}}$\,.    
Note that, $\t$ is unique, with this property, up to a factor of modulus~$1$. 

\begin{prop}[cf.~\cite{Qui-QJM98}\,] \label{prop:Q-J}
$\t^2=(-1)^d\,{\rm Id}_{\mathcal{L}}$\,, where $d$ is the Chern number of $\mathcal{L}$\,. 
\end{prop}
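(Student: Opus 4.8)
The plan is to show that the assignment $\mathcal{L}\mapsto\ep(\mathcal{L})$, where $\t^2=\ep(\mathcal{L})\,{\rm Id}_{\mathcal{L}}$ with $\ep(\mathcal{L})\in\{\pm1\}$, is a homomorphism from $H^1(Y,\C^{\!*})$ (with the tensor product) to $\{\pm1\}$, and then to evaluate it on a generator.

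First I would check that $\ep(\mathcal{L})$ is well defined: by the discussion preceding the Proposition, $\t$ is unique up to a factor $c$ of modulus $1$, and since $\t$ is complex-conjugate linear on the fibres, $(c\,\t)^2=|c|^2\t^2=\t^2$; hence $\ep(\mathcal{L})$ depends only on the isomorphism class of $\mathcal{L}$. Next, $\ep$ is multiplicative: for the trivial bundle the lift $(y,v)\mapsto(\s(y),\overline{v})$ of $\s$ shows $\ep(\ol_Y)=1$, while given lifts $\t_1,\t_2$ of $\s$ to $\mathcal{L}_1,\mathcal{L}_2$ the induced lift $\t_1\otimes\t_2$ of $\s$ to $\mathcal{L}_1\otimes\mathcal{L}_2$ is again antiholomorphic and complex-conjugate linear on the fibres, with $(\t_1\otimes\t_2)^2=\t_1^{\,2}\otimes\t_2^{\,2}=\ep(\mathcal{L}_1)\,\ep(\mathcal{L}_2)\,{\rm Id}$; thus $\ep(\mathcal{L}_1\otimes\mathcal{L}_2)=\ep(\mathcal{L}_1)\,\ep(\mathcal{L}_2)$, and in particular $\ep(\mathcal{L}^*)=\ep(\mathcal{L})$.

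Since $H^1(Y,\C^{\!*})=\mathbb{Z}$ is generated, as a group, by the class of $\mathcal{L}_p:=\ol(p)$, the line bundle of a one-point divisor (of Chern number $1$), it remains to show $\ep(\mathcal{L}_p)=-1$. I would argue by contradiction: if some lift $\t$ of $\s$ to $\mathcal{L}_p$ satisfied $\t^2={\rm Id}$, then $s\mapsto\t\circ s\circ\s$ would be a complex-conjugate linear involution of $H^0(Y,\mathcal{L}_p)$, a nonzero space since it contains the canonical section vanishing simply at $p$; such an involution fixes some nonzero section $s$. But $\t$ is fibrewise bijective and fixes the zero section, so the zero set of $\t\circ s\circ\s$ is $\s$ applied to that of $s$; as $\t\circ s\circ\s=s$ and $\mathcal{L}_p$ has Chern number $1$, the unique zero of $s$ would be a fixed point of $\s$, contradicting the definition of a Riemann sphere. (Equivalently, one may identify $Y$ with $\mathbb{C}P^1$, $\s$ with the antipodal map, and $\mathcal{L}_p^{\,*}$ with the tautological bundle $\ol(-1)\subseteq\mathbb{C}P^1\times\C^2$, on which $\s$ lifts to the restriction of the quaternionic structure $j$ of $\C^2$, where $j^2=-{\rm Id}$.)

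Finally, for $\mathcal{L}$ of Chern number $d\ge0$ one has $\mathcal{L}\cong\mathcal{L}_p^{\otimes d}$, so $\ep(\mathcal{L})=\ep(\mathcal{L}_p)^d=(-1)^d$ by the above, and the case $d<0$ follows by applying this to $\mathcal{L}^*$ together with $\ep(\mathcal{L})=\ep(\mathcal{L}^*)$. The main obstacle is the identity $\ep(\mathcal{L}_p)=-1$: it is the only step that uses the fixed-point-freeness of $\s$, and it is precisely what produces the sign — everything else is formal.
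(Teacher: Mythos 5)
Your proof is correct and follows essentially the same route as the paper: reduce to the Chern number one case via multiplicativity of the sign under tensor products, and there derive $\t^2=-{\rm Id}$ from the fixed-point-freeness of $\s$. Your degree-one argument (a $\t$-invariant section would have its unique zero fixed by $\s$) is just an unwound version of the paper's identification $Y=PH$ with $H=H^0(Y,\mathcal{L})$, and you are merely more explicit about the well-definedness of the sign and the multiplicativity that the paper leaves implicit.
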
 
\begin{proof} 
Suppose that $\mathcal{L}$ has Chern number $1$. By using the Kodaira vanishing and the Riemann--Roch theorems,  
we obtain that $H=H^0(Y,\mathcal{L})$ has (complex) dimension two. Consequently, the elements of $H$ which vanish at a point of $Y$ 
form a one-dimensional subspace of $H$. Therefore we may identify $Y=PH$ and, together with the fact that $\s$ has no fixed points, 
we deduce $\t^2=-{\rm Id}_{\mathcal{L}}$\,.\\ 
\indent 
Now, the proof follows quickly from the fact that any holomorphic line bundle over $Y$, of Chern number $d$\,,  
is of the form $\mathcal{L}^d$, where $\mathcal{L}$ has Chern number $1$. 
\end{proof} 

\begin{rem} \label{rem:Q-J}  
Let $\mathcal{L}$ be a holomorphic line bundle, of Chern number $-1$, over a Riemann sphere $Y$, and let $H=H^0(Y,\mathcal{L}^*)$\,. 
The proof of Proposition \ref{prop:Q-J} shows that, under the identification $Y=PH(=P(H^*)\,)$\,, 
$\mathcal{L}$ becomes the tautological line bundle, canonically embedded into the trivial bundle $Y\times H^*$. 
\end{rem} 

\indent 
We can now make the following: 

\begin{defn} 
Let $Y$ be the Riemann sphere and let $H=H^0(Y,\mathcal{L})$\,, where $\mathcal{L}$ is a holomorphic line bundle over $Y$ of Chern number $1$.\\ 
\indent 
The \emph{algebra of quaternions} is the unital associative subalgebra $\Hq\!\subseteq{\rm End}H$ formed of the elements 
which commute with the complex-conjugate isomorphism induced by~$\t$. (Briefly, $\Hq$ is formed of the `real' elements of ${\rm End}H$.) 
\end{defn} 

\indent 
The real multiples of ${\rm Id}_H$ are the \emph{real quaternions}, whilst the trace-free elements of $\Hq\!\subseteq{\rm End}H$ are 
the \emph{imaginary quaternions}. Also, the (complex) determinant on ${\rm End}H$ restricts to give the Euclidean structure of $\Hq$. 
Then the imaginary quaternions of modulus $1$ are linear complex isomorphisms of $H$ of square $-{\rm Id}_H$\,. Thus, we obtain 
the identification $S^2=PH(=Y)$ through which any imaginary quaternion of modulus $1$ is identified with its eigenspace 
corresponding to $-{\rm i}$\,. Then, under this identification, $\s$ is the antipodal map.\\  
\indent 
Note that, $\Hq$ does not depend of the choice of $\t$, but only of $Y$, $\s$, and $\mathcal{L}$\,. 
Moreover, if $\mathcal{L}'$ is another holomorphic line bundle, of Chern number $1$, and $H'=H^0(Y,\mathcal{L}')$  
then the two embeddings of $\Hq$ into ${\rm End}H$ and ${\rm End}H'$ differ by a composition with an element of  
${\rm SO}(3)$ acting trivially on $\R$ and canonically on $\R^3(={\rm Im}\Hq)$. 
In particular, the automorphism group of $\Hq$ is ${\rm SO}(3)$\,.

\section{The main objects of quaternionic geometry}  \label{section:q_objects}  

\indent 
In this section, we define the class of manifolds for which we want to describe the Penrose transform. 
Up to integrability, the corresponding geometric structure is given by the almost co-CR quaternionic structures of \cite{fq,fq_2}\,, 
whilst to formulate the integrability we need the generalized connections of \cite{qgfs}\,. We start by recalling the latter. 

\subsection{Principal $\r$-connections}  
Let $(P,M,G)$ be a principal bundle and let $F$ be a vector bundle over $M$, endowed with a morphism of vector bundles $\r:F\to TM$. 
A \emph{principal $\r$-connection} on $P$ is a $G$-invariant morphism of vector bundles $C:\p^*F\to TP$ such that $\widetilde{\dif\!\p}\circ C=\p^*\r$\,, 
where $\p:P\to M$ is the projection and $\widetilde{\dif\!\p}:TP\to\p^*(TM)$ is the morphism of vector bundles induced by $\dif\!\p$\,.\\ 
\indent 
The notion of associated connection has a straight generalization to this setting, as we shall now explain. Let $S$ be a manifold on which 
$G$ is acting to the left. Denote by $Y=P\times_GS$ the associated bundle, by $\p_Y$ its projection, and by $\mu:TP\times S\to TY$ 
the obvious $G$-invariant morphism of vector bundles, covering the projection $P\times S\to Y$.\\ 
\indent 
On identifying, as usual, $\p_Y^*(F)$ with a submanifold of $Y\times F$, we may define the \emph{associated $\r$-connection} $c:\p_Y^*(F)\to TY$, 
as follows: $c\bigl([u,s],f\bigr)=\mu\bigl(C(u,f),s\bigr)$\,, for any $(u,f)\in\p^*F$ and $s\in S$, 
where $[u,s]\in Y$ is the equivalence class determined by $(u,s)\in P\times S$.  Note that, if we compose $c$ with the morphism of vector bundles  
from $TY$ onto $\p_Y^*(TM)$\,, induced by $\dif\!\p_Y$, we obtain $\p_Y^*(\r)$\,. Indeed, as $\dif\!\p_Y\circ\mu$ is the composition 
of the projection from $TP\times S$ onto $TP$ followed by $\dif\!\p$\,, we have $(\dif\!\p_Y\circ c)\bigl([u,s],f\bigr)=(\dif\!\p\circ C)(u,f)=\r(f)$\,, 
for any $(u,f)\in\p^*F$ and $s\in S$. 

\begin{rem} 
Let $(P,M,G)$ be a principal bundle endowed with a principal $\r$-connection $C:\p^*F\to TP$, where $F$ is a vector bundle over $M$, 
and $\r:F\to TM$ a morphism of vector bundles.\\ 
\indent 
Suppose that $E\subseteq F$ is a vector subbundle mapped isomorphically by $\r$ onto a distribution on $M$. Then, by restriction, $C$ induces 
a partial principal connection on $P$ over $\r(E)$\,. This, obviously, works, also, if $E\subseteq F^{\C\!}$ is a complex vector subbundle 
mapped isomorphically by (the complexification of) $\r$ onto a complex distribution on $M$. 
\end{rem}  

\indent 
If $S$ is a vector space on which $G$ acts by linear isomorphisms then any (associated) $\r$-connection on the associated vector bundle 
$Y=P\times_GS$ corresponds to a covariant derivation $\nabla:\G(Y)\to\G\bigl({\rm Hom}(F,Y)\bigr)$ which is a linear map satisfying 
$\nabla(fs)=\r^*(\dif\!f)\otimes s+f(\nabla s)$\,, for any function $f$ on $M$ and any section $s$ of $Y$. 
If $Y=F$ then we can define the \emph{torsion} of $\nabla$ which is the section $T$ of $TM\otimes\Lambda^2F^*$ characterised by 
$T(s_1,s_2)=\r\bigl(\nabla_{s_1}s_2-\nabla_{s_2}s_1\bigr)-\bigl[\r(s_1),\r(s_2)\bigr]$\,, for any sections $s_1$\,, $s_2$ of $F$.  

\subsection{Quaternionic objects} 
A \emph{linear quaternionic structure} on a (real) vector space $F$ is an equivalence class of morphisms of 
associative algebras from $\Hq$ to ${\rm End}F$, where two such morphisms $\s_1$ and $\s_2$ are equivalent if $\s_2=\s_1\circ a$\,, for some 
$a\in{\rm SO}(3)$\,. If $\s:\Hq\to{\rm End}F$ is a morphism of associative algebras then the induced linear quaternionic structure on $F$ 
is determined by $\s(S^2)$\,, whose elements are the \emph{admissible linear complex structures} on $F$. Thus, a linear quaternionic structure is
given by a family of linear complex structures, parameterized by the Riemann sphere.\\ 
\indent 
A \emph{linear co-CR quaternionic structure} on a vector space $U$ is a pair $(F,\r)$\,, where $F$ is a quaternionic vector space, and $\r:F\to U$ 
is a surjective linear map such that $({\rm ker}\r)\cap J({\rm ker}\r)=\{0\}$\,, for any admissible linear complex structure $J$ on $F$; consequently, 
$C=\r\bigl({\rm ker}(J+{\rm i})\bigr)$ is a linear co-CR structure on $U$ (that is, $C+\overline{C}=U^{\C}$). Thus, a linear co-CR quaternionic structure  
is given by a family of linear co-CR structures, parameterized by the Riemann sphere.\\ 
\indent 
These notions extend in the obvious way to vector bundles, thus, giving the notion of almost co-CR quaternionic structure on a manifold. 
Note that, if $F$ is a quaternionic vector bundle then the corresponding space $Y$ of admissible linear complex structures on (the fibres of) $F$ 
is a sphere bundle, with structural group ${\rm SO}(3)$\,.\\ 
\indent  
Now, let $(F,\r)$ be an almost co-CR quaternionic structure on $M$ and suppose that $F$ is endowed with a $\r$-connection (compatible with its  
structural group). Denote by $Y$ the bundle of admissible linear complex structures on $F$ and let $c:\p^*F\to TY$ be the associated $\r$-connection on it, 
where $\p:Y\to M$ is the projection.\\ 
\indent 
Let $\mathcal{B}\subseteq T^{\C\!}Y$ be the complex distribution given by $\mathcal{B}_J=c_J\bigl({\rm ker}(J+{\rm i})\bigr)$\,, for any $J\in Y$. 
Then $\Cal=\mathcal{B}\oplus({\rm ker}\dif\!\p)^{0,1}$ is an almost co-CR structure on $Y$; that is, $\Cal+\overline{\Cal}=T^{\C\!}Y$. 

\begin{defn} \label{defn:q_object} 
We say that $(M,F,\r,c)$ is a \emph{quaternionic object} if $\Cal$ is integrable (that is, its space of sections is closed under the usual bracket). 
\end{defn} 

\indent 
Note that, a quaternionic object is a (real) $\r$-quaternionic manifold \cite{qgfs} with $\r$ surjective. In particular, if $\r$ is an isomorphism then we obtain 
the classical notion of quaternionic manifold.\\ 
\indent 
To define the corresponding twistor spaces, suppose that $(M,F,\r,c)$ is a quaternionic object for which there exists a surjective submersion 
$\psi:Y\to Z$ such that:\\ 
\indent 
\quad(1) $({\rm ker}\dif\!\psi)^{\C}=\Cal\cap\overline{\Cal}$\,,\\ 
\indent  
\quad(2) $\Cal$ is projectable with respect to $\psi$ (note that, this is a consequence of (1)\,, if the fibres of $\psi$ are connected),\\ 
\indent 
\quad(3) $\psi$ restricted to each fibre of $\p$ is injective.\\ 
\indent 
Then $Z$ endowed with $\dif\!\psi(\Cal)$ is a complex manifold (with $T^{0,1}Z=\dif\!\psi(\Cal)$\,) which is the \emph{twistor space} 
of $(M,F,\r,c)$\,. Furthermore, $\bigl(\psi(Y_x)\bigr)_{x\in M}$\,, where $Y_x=\p^{-1}(x)$\,, is a smooth family of embedded Riemann spheres on $Z$\,,   
whose members are the \emph{real twistor spheres}. 

\begin{exm} 
Let $(U,F,\r)$ be a co-CR quaternionic vector space. Then $U\times F$ is a quaternionic vector bundle over $U$ and ${\rm Id}_U\times\r$ 
is a morphism of vector bundles from $U\times F$ onto $TU\,(=U\times U)$\,, giving an almost co-CR quaternionic structure on $U$.\\ 
\indent 
Moreover, on endowing $U\times F$ with the obvious (classical) flat connection, we obtain a quaternionic object whose twistor space is 
the holomorphic vector bundle $\mathcal{U}$\,, over the Riemann sphere, which is the quotient of the trivial holomorphic vector bundle $S^2\times U^{\C}$ 
through $\bigl(\{J\}\times\r\bigl({\rm ker}(J+{\rm i})\bigr)\bigr)_{J\in S^2}$\,; furthermore, $\mathcal{U}$ determines $(U,F,\r)$ \cite{fq,fq_2}\,. 
\end{exm} 

\indent 
Returning to the general case, let $Z$ be the twistor space of the quaternionic object $(M,F,\r,c)$\,. Then, for any $x\in M$, the normal 
bundle of the corresponding twistor sphere $\psi(Y_x)\subseteq Z$\,, where $\psi:Y\to Z$ is as above, is (isomorphic to) 
the holomorphic vector bundle of $(T_xM,E_x,\r_x)$\,. Consequently, by applying classical results (see \cite{qgfs} and the references therein) 
we obtain that $\bigl(\psi(Y_x)\bigr)_{x\in M}$ is contained by a locally complete family of embedded Riemann spheres parameterized 
by a complexification of $M$ (in particular, $M$ is real-analytic); the members of this family are the \emph{twistor spheres}. 
Conversely, up to a conjugation and on restricting, if necessary, to an open subset, \emph{any complex manifold endowed with a locally complete family 
of Riemann spheres, with positive normal bundles, is the twistor space of a quaternionic object} (consequence of \cite{qgfs}\,).\\ 
\indent 
There is only one more ingredient needed to start describing the Penrose transform for quaternionic objects: a holomorphic line bundle 
over the twistor space.\\ 
\indent 
Let $Z$ be the twistor space of the quaternionic object $(M,F,\r,c)$\,. Denoting by $K_Z$ the canonical line bundle of $Z$ (that is, $K_Z$ is the determinant 
of the holomorphic cotangent bundle of $Z$) and with $\psi:Y\to Z$ as above, we may assume, by passing, if necessary, to an open neighbourhood 
of any point of $M$, that there exists a line bundle $\mathcal{L}$ over $Z$ which is a root of $K_Z$ and whose restriction 
to each twistor sphere has Chern number $-1$. Furthermore, as the antipodal map on $Y$ induces an involutive antiholomorphic involution $\s:Z\to Z$, 
without fixed points, we have that $\s$ is covered by an antiholomorpic involution $\t_Z:K_Z\to K_Z$\,. 
It follows that, we may assume that there exists an antiholomorphic anti-involution $\t:\mathcal{L}\to\mathcal{L}$ covering $\s$\,. 
Note that, if we restrict $\mathcal{L}$ to the real twistor spheres, then $\t$ give antiholomorphic anti-involutions as in Section \ref{section:quaternions}\,. 
Also, for classical quaternionic manifolds of dimensions at least eight it is known that $\mathcal{L}^2$ exists globally \cite{Sal-dg_qm}\,, \cite{PePoSw-98}\,. 
 
\begin{rem}  
\emph{The quaternionic objects are abundant.} Various classes of examples can be found in \cite{Pan-integrab_co-cr_q}\,, \cite{qgfs}\,. 
Furthermore, if the twistor space of a quaternionic object 
is a complex projective manifold then it is a `rationally connected manifold', one of the main objects of study of algebraic geometry 
(see \cite{Paltin-2005}\,, and the references therein). 
\end{rem}

\section{Quaternionic pluriharmonicity} \label{section:q-ph} 

\indent 
Let $(M,F,\r,c)$ be a quaternionic object with twistor space $Z$. Suppose that $\mathcal{L}$ is a holomorphic line bundle over $Z$ 
whose restriction to each twistor sphere has Chern number $-1$, and which is endowed with an anti-holomorphic anti-involution $\t$ 
which covers the conjugation $\s$ on $Z$ induced by the antipodal map on $Y$. As already mentioned, such a line bundle 
always exists locally (that is, if we pass to an open subset of $M$). 
Let $H$ be the dual of the direct image through $\p$ of $\psi^*(\mathcal{L}^*)$\,; 
that is, the space of sections of $H^*$ over each open set $U\subseteq M$ is the space of 
sections of $\psi^*(\mathcal{L}^*)$\,, over $\p^{-1}(U)$\,, which are holomorphic when restricted to the fibres of $\p$.\\ 
\indent 
We may assume $F^{\C\!}=H\otimes E$, where $E$ is a complex vector bundle. 
Indeed, the tensor product of $\psi^*(\mathcal{L}^*)$ and $\mathcal{B}$ 
(note that, the latter is equal to the quotient of $({\rm ker}\dif\!\psi)^{\C}$ 
through $({\rm ker}\dif\!\p)^{0,1}$) is trivial when restricted to each fibre of $\p$\,. Therefore 
$\psi^*(\mathcal{L}^*)\otimes\mathcal{B}=\p^*(E^*)$\,, for some vector bundle $E$ over $M$. 
Now (cf.~\cite{qgfs}\,), $F^{\C\!}$ is equal to the dual of the direct image through $\p$ of $\mathcal{B}^*=\psi^*(\mathcal{L}^*)\otimes\p^*(E)$ 
and is therefore equal to $H\otimes E$.\\  
\indent 
Note that (compare Remark \ref{rem:Q-J}), $Y=PH$, and $\psi^*(\mathcal{L}\setminus0)=H\setminus0$\,, as principal bundles over $Y$, 
with structural group $\C^{\!*}$. We shall denote by $\psi_H:H\setminus0\to\mathcal{L}\setminus0$ the corresponding bundle-map.\\  
\indent 
Furthermore, $H\setminus0$ is a principal bundle over $M$, with group $\Hq^{\!*}$; that is, $H$ is a hypercomplex vector bundle of real rank four over $M$. 
Indeed, for any $u\in H\setminus0$ and $z_1+{\rm j}z_2\in\Hq^{\!*}$, we may define $u\cdot(z_1+{\rm j}z_2)=z_1u+z_2\t u$\,.\\ 
\indent 
Let $\p_H:H\to M$ and $p:H\setminus0\to Y$ be the projections. We have the following diagram:    
\begin{equation}  \label{diagram:for_Penrose} 
\begin{gathered} 
\xymatrix{
                                                       &              H\setminus0 \ar[dl]_{\psi_H} \ar[d]^p  \ar@/^/[ddr]^{\p_H}       &                  \\
    \mathcal{L}\setminus0  \ar[d]   &                     \hspace{4mm}Y(=PH)    \ar[dl]_{\psi} \ar[dr]^{\p}      &                  \\
                      Z                               &                                                                 &            M                             
   } 
\end{gathered} 
\end{equation}

\indent 
Obviously, $\Cal_H=(\dif\!\psi_H)^{-1}\bigl(T^{0,1}(\mathcal{L}\setminus0)\bigr)$ is a co-CR structure on $H\setminus0$\,. 
(Note that, $\dif\!p(\Cal_H)$ is the co-CR structure involved in Definition \ref{defn:q_object}\,.)  
If $q\in\Hq^{\!*}$ we denote by $[q]$ its image under the projection onto $\Hq^{\!*}\!/\C^{\!*}=\C\!P^1$ (where $\C^{\!*}\subseteq\Hq^{\!*}$ 
is acting to the right on $\Hq^{\!*}$). Then $[q]\mapsto\Cal_H\cdot q^{-1}$ is a family of co-CR structures on $H\setminus0$ parametrized 
by the Riemann sphere. It follows that $H\setminus0$ is a quaternionic object such that its twistor space $Z(H\setminus0)$\,, 
with respect to its underlying smooth structure, is diffeomorphic to $S^2\times(\mathcal{L}\setminus0)$\,. 
Furthermore, $\Hq^{\!*}$ acts by twistorial diffeomorphisms on $H\setminus0$ (corresponding to holomorphic diffeomorphisms on $Z(H\setminus0)$\,).\\ 
\indent 
In fact, $H\setminus0$ is a special type of quaternionic object which we define, next. 

\begin{defn} \label{defn:hypercomplex_object} 
A quaternionic object $(M,F,\r,c)$ is \emph{hypercomplex} if the bundle $Y$ of admissible linear complex structures on $F$ 
is trivial and $c$ is the corresponding trivial flat connection. 
\end{defn} 

\indent 
For the twistor space $Z$ of a hypercomplex object $(M,F,\r,c)$\,, defined by $\psi:Y\to Z$, we shall assume that the projection $Y=M\times S^2\to S^2$ 
factorises into $\psi$ followed by a holomorphic submersion from $Z$ onto $S^2$ (obviously, this is automatically satisfied if the fibres of $\psi$ are 
connected).\\ 
\indent 
Note that, a hypercomplex object $(M,F,\r,c)$ with $\r$ an isomorphism is a classical hypercomplex manifold. 

\begin{prop} \label{prop:twistor_hyper}
Let $Z$ be the twistor space of a quaternionic object $(M,F,\r,c)$ defined by $\psi:Y\to Z$\,. If the fibres of $\psi$ are connected  
then the following assertions are equivalent:\\ 
\indent 
{\rm (i)} There exists a $\r$-connection $c_1$ with respect to which $(M,F,\r,c_1)$ is hypercomplex.\\ 
\indent 
{\rm (ii)} There exists a holomorphic submersion $\phi:Z\to S^2$ whose restriction to each twistor sphere is a diffeomorphism 
and which intertwines the conjugation on $Z$ and the antipodal map on $S^2$. 
\end{prop}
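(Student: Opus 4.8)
The plan is to prove the two implications separately, with the hypercomplex structure on $M$ corresponding, on the twistor side, precisely to a holomorphic (and conjugation-equivariant) ``time function'' $\phi:Z\to S^2$.

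\emph{From {\rm (i)} to {\rm (ii)}.} Suppose $c_1$ is a $\r$-connection making $(M,F,\r,c_1)$ hypercomplex. By Definition \ref{defn:hypercomplex_object}, the bundle $Y$ of admissible complex structures is the trivial bundle $M\times S^2$ and $c_1$ is the trivial flat connection. I would then observe that the almost co-CR structure $\Cal$ built from $c_1$ decomposes compatibly with this product: along the $S^2$-directions it is just $T^{0,1}S^2$, so the projection $\mathrm{pr}_{S^2}:Y=M\times S^2\to S^2$ is holomorphic for the complex structure $\dif\!\psi(\Cal)$ on $Z$ (here one uses the standing assumption, recalled just after Definition \ref{defn:hypercomplex_object}, that $\mathrm{pr}_{S^2}$ factorises through $\psi$). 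This factorisation gives the desired holomorphic submersion $\phi:Z\to S^2$ with $\phi\circ\psi=\mathrm{pr}_{S^2}$. Its restriction to a twistor sphere $\psi(Y_x)$ is $\phi\circ\psi|_{\{x\}\times S^2}=\mathrm{pr}_{S^2}|_{\{x\}\times S^2}$, which is a diffeomorphism by condition (3) in the definition of the twistor space (injectivity of $\psi$ on fibres of $\p$). Finally, since the conjugation $\s$ on $Z$ is induced by the antipodal map on the $S^2$-factor of $Y$, the relation $\phi\circ\psi=\mathrm{pr}_{S^2}$ forces $\phi$ to intertwine $\s$ with the antipodal map on $S^2$.

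\emph{From {\rm (ii)} to {\rm (i)}.} Conversely, given $\phi:Z\to S^2$ as in (ii), set $\varPhi=\phi\circ\psi:Y\to S^2$. Because $\phi$ is holomorphic and restricts to a diffeomorphism on each twistor sphere $\psi(Y_x)$, the map $\varPhi$ is a submersion which is a diffeomorphism on each fibre $Y_x$ of $\p$; together with the bundle projection $\p:Y\to M$ this yields a diffeomorphism $Y\cong M\times S^2$ identifying $\varPhi$ with $\mathrm{pr}_{S^2}$ and $\p$ with $\mathrm{pr}_M$. Transporting the $\mathrm{SO}(3)$-structure, this trivialises the sphere bundle $Y$ of admissible complex structures on $F$; I would then take $c_1$ to be the trivial flat connection associated with this trivialisation. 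It remains to check that $(M,F,\r,c_1)$ is a quaternionic object, i.e.\ that the almost co-CR structure $\Cal_1$ built from $c_1$ is integrable. The point is that the integrable complex structure on $Z$ is unchanged, while the two co-CR structures $\Cal$ and $\Cal_1$ on $Y$ have the same projection to $Z$ under $\psi$: since $\varPhi$-fibres are $\phi$-fibres pulled back, and on each of these the horizontal distributions of $c$ and $c_1$ must both project isomorphically onto $T^{1,0}$ of the same fibre of $\phi\subseteq Z$, they agree modulo $(\ker\dif\!\p)^{0,1}$. Hence $\Cal_1=\Cal$, which is integrable, so $(M,F,\r,c_1)$ is hypercomplex with twistor space $Z$.

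\emph{Main obstacle.} The routine parts are the product-structure bookkeeping; the genuinely delicate step is the verification in the converse direction that the \emph{a priori} different flat connection $c_1$ produces the \emph{same} co-CR structure $\Cal_1=\Cal$ on $Y$ (equivalently, the same complex manifold $Z$). One must argue that a holomorphic ``time function'' $\phi$ leaves no room for the horizontal lift to differ: the fibres of $\phi$ in $Z$ are complex submanifolds, their preimages in $Y$ are $\Cal$-complex, and the condition $\dif\!\psi(\ker\dif\!\p^{\,0,1}\oplus \mathcal{B})=T^{0,1}Z$ pins $\mathcal{B}$ down uniquely once $\phi$ is fixed. Making this uniqueness precise — rather than merely plausible — is where the care is needed; everything else follows from the definitions of the twistor space and of a hypercomplex object.
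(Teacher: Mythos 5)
Your argument is correct and follows the same route as the paper's (very terse) proof: (i)$\Rightarrow$(ii) is the factorisation $\mathrm{pr}_{S^2}=\phi\circ\psi$ granted by the connectedness of the fibres of $\psi$, and (ii)$\Rightarrow$(i) is the trivialisation $Y=M\times S^2$ induced by $\bigl(\p,\phi\circ\psi\bigr)$ together with the associated trivial flat connection. What you add, and the paper omits, is the verification that this $c_1$ really yields a quaternionic object, i.e.\ that the induced almost co-CR structure is integrable; your reduction of this to $\Cal_1=\Cal$ is the right move, and the uniqueness you flag as the delicate point does close cleanly. Indeed, write $\Phi=\phi\circ\psi$; since $\phi$ restricts to a diffeomorphism of each twistor sphere and $\psi$ is injective on each fibre of $\p$\,, the distribution $\ker\dif\!\Phi$ is a complement of $\ker\dif\!\p$ in $TY$, and $\dif\!\Phi$ maps $(\ker\dif\!\p)^{0,1}$ isomorphically onto $T^{0,1}S^2$. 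As $\phi$ is holomorphic, $\dif\!\Phi(\Cal)\subseteq T^{0,1}S^2$, so every element of $\Cal$ differs from an element of $(\ker\dif\!\p)^{0,1}$ by an element of $(\ker\dif\!\Phi)^{\C}$, whence $\Cal=\bigl(\Cal\cap(\ker\dif\!\Phi)^{\C}\bigr)\oplus(\ker\dif\!\p)^{0,1}$. Now $\Cal\cap(\ker\dif\!\Phi)^{\C}$ and $\mathcal{B}_1=c_1\bigl({\rm ker}(J+{\rm i})\bigr)$ are both subspaces of $(\ker\dif\!\Phi)^{\C}$ with image $\r\bigl({\rm ker}(J+{\rm i})\bigr)$ under $\dif\!\p$\,, and $\dif\!\p$ is injective on $(\ker\dif\!\Phi)^{\C}$; hence they coincide and $\Cal_1=\Cal$. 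One caveat, which you share with the paper: in (i)$\Rightarrow$(ii) you invoke the standing factorisation assumption with the \emph{original} $\psi$, which tacitly reads (i) as asserting the existence of a $c_1$ that is hypercomplex \emph{and} induces the same co-CR structure (hence the same $Z$ and $\psi$); this is evidently the intended reading, and your (ii)$\Rightarrow$(i), by establishing $\Cal_1=\Cal$, consistently produces such a $c_1$.
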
 
\begin{proof} 
If (ii) holds, then $\phi\circ\psi:Y\to S^2$ induces a trivialization $M=Y\times S^2$. 
Conversely, if (i) holds then the fibres of $\psi$ are contained in the fibres of the projection $Y\to S^2$ induced by the trivialization $Y=M\times S^2$. 
Thus, $Y\to S^2$ factorises as $\phi\circ\psi$\,, where $\phi$ is as in (ii)\,.  
\end{proof} 

\indent 
For a hypercomplex object $(M,F,\r,c)$ with twistor space $Z$ we have a distinguished class of holomorphic line bundles over $Z$, namely 
those which are pull backs through $\phi$ of holomorphic line bundles over $S^2$. Any such holomorphic line bundle $\mathcal{L}$ 
is characterised by the fact that $\psi^{*\!}\mathcal{L}$ is trivial along the fibres of the projection $\phi\circ\psi:Y\to S^2$. 

\begin{prop} \label{prop:hyper_right_connection} 
Let $Z$ be the twistor space of a hypercomplex object $(M,F,\r,c)$\,, with a decomposition $F^{\C\!}=H\otimes E$ 
corresponding to a holomorphic line bundle $\mathcal{L}$ over $Z$.\\ 
\indent 
Then there exists a unique $\r$-connection on $H$ which induces the trivial flat connection on~$Y$ and which induces an `anti-self-dual' connection 
on $\Lambda^2H$ (that is, flat along the admissible co-CR structures defined by the constant sections of $Y(=M\times S^2)$\,). 
\end{prop}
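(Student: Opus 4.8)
The plan is to use the first of the two requirements to cut the problem down to a single complex line bundle over $M$, and then to read off both the existence and the uniqueness of the required $\r$-connection from the twistor correspondence for quaternionic objects (see \cite{qgfs} and the references therein).

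First I would carry out the reduction. Since $(M,F,\r,c)$ is hypercomplex we have $Y=M\times S^2$ and, by Proposition \ref{prop:twistor_hyper}, a holomorphic submersion $\phi:Z\to S^2$ whose restriction to each twistor sphere is a biholomorphism onto $S^2$; also $H$ is a hypercomplex vector bundle of real rank four, so $\Lambda^2H$ is a complex line bundle over $M$, carrying the real structure induced by the quaternionic structure of $H$. A $\r$-connection on $H$ inducing the trivial flat connection on $Y=PH$ necessarily preserves each admissible linear complex structure of $H$, that is, its underlying linear connection takes values in the real quaternions $\R\subseteq\Hq$; such $\r$-connections thus form an affine space over $\Gamma(F^*)$, and each of them is determined by --- and determines --- the $\r$-connection it induces on $\Lambda^2H$ (and the real $\r$-connections on $\Lambda^2H$ likewise form an affine space over $\Gamma(F^*)$). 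Hence it is enough to produce a unique real, anti-self-dual $\r$-connection on $\Lambda^2H$, anti-self-duality meaning here flatness along each of the co-CR structures $C_J$ defined by the constant sections of $Y$.

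For existence, the key computation is that, using $\psi^*(\mathcal{L}\setminus0)=H\setminus0$ (so that $\psi^*\mathcal{L}$ is the tautological sub-line-bundle of $\p^*H$) and the identification of $\phi\circ\psi:Y\to S^2$ with the projection $Y=M\times S^2\to S^2$, one has
\[
\psi^*\bigl(\mathcal{L}^{\otimes2}\bigr)\otimes(\phi\circ\psi)^*\bigl(\ol(2)\bigr)=\p^*\bigl(\Lambda^2H\bigr)\,.
\]
In particular $\mathcal{L}^{\otimes2}\otimes\phi^*\bigl(\ol(2)\bigr)$ is a holomorphic line bundle over $Z$ which is trivial on every twistor sphere and which carries a compatible anti-holomorphic anti-involution over $\s$, obtained from $\t$ and from the conjugation on $Z$ induced by the antipodal map of $Y$. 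Its holomorphic structure pulls back to a holomorphic structure along the integrable complex distribution $\Cal$ on $\p^*(\Lambda^2H)$; being trivial along the fibres of $\p$, the latter then determines, through its $\mathcal{B}$-component (and the relation $\psi^*(\mathcal{L}^*)\otimes\mathcal{B}=\p^*(E^*)$), a $\r$-connection on $\Lambda^2H$ over $M$ which is flat along each $C_J$ by the integrability of $\Cal$, and real by that of $\t$. By the reduction step this yields a $\r$-connection on $H$ with both required properties. Equivalently, one may simply invoke the twistor correspondence of \cite{qgfs}: the bundle $\mathcal{L}^{\otimes2}\otimes\phi^*\bigl(\ol(2)\bigr)$ is exactly the holomorphic line bundle over $Z$ that it associates to $\Lambda^2H$ endowed with the sought $\r$-connection.

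For uniqueness, two $\r$-connections on $H$ meeting both requirements induce, by the reduction step, two real, anti-self-dual $\r$-connections on $\Lambda^2H$; running the twistor correspondence backwards produces two holomorphic line bundles over $Z$, each trivial on the twistor spheres and each with a compatible anti-involution. Since $H$ is, by definition, the dual of the direct image through $\p$ of $\psi^*(\mathcal{L}^*)$, each of these is forced --- through the displayed identity and the relations recorded in the diagram \eqref{diagram:for_Penrose} --- to coincide with $\mathcal{L}^{\otimes2}\otimes\phi^*\bigl(\ol(2)\bigr)$; hence the two coincide, and so do the two $\r$-connections on $\Lambda^2H$ and, by the reduction step, on $H$. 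I expect this last identification to be the main obstacle: checking, by tracking the diagram \eqref{diagram:for_Penrose} together with $\psi^*(\mathcal{L}\setminus0)=H\setminus0$ and $\psi^*(\mathcal{L}^*)\otimes\mathcal{B}=\p^*(E^*)$, that the complex line bundle over $M$ delivered by the twistor correspondence really is $\Lambda^2H$ and that the holomorphic line bundle over $Z$ recovered from it is $\mathcal{L}^{\otimes2}\otimes\phi^*\bigl(\ol(2)\bigr)$, keeping careful account of the several duals and of the twist by $\phi^*\bigl(\ol(2)\bigr)$. By comparison, the reduction is a pointwise computation inside the algebra $\Hq$ of quaternions, and the checks involving the reality structure $\t$ use only the material of Section \ref{section:quaternions}.
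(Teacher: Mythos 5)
Your existence argument takes a genuinely different route from the paper's. The paper first treats the base case $\mathcal{L}=\phi^*\bigl(\ol(-1)\bigr)$ by an explicit construction: since $\psi^*\mathcal{L}$ is canonically trivial along the fibres of $\phi\circ\psi$, one may differentiate the fibrewise-holomorphic sections of $\mathcal{L}^*$ that define $H^*$ along lifts tangent to those fibres; the sections pulled back from $H^0\bigl(S^2,\ol(1)\bigr)$ come out covariantly constant, so the resulting $\r$-connection on $H$ is the trivializing flat one, and a general $\mathcal{L}_1$ is then handled by writing $H_1=H\otimes L$ with $L$ the Ward transform of $\mathcal{L}_1\otimes\mathcal{L}^*$. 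You instead reduce to $\Lambda^2H$ and feed $\mathcal{L}^{\otimes2}\otimes\phi^*\bigl(\ol(2)\bigr)$ to the Ward transform; your displayed identity is correct (it is the relative Euler sequence of $PH$ combined with $\ker\dif\!\p=(\phi\circ\psi)^*(TS^2)$), so this does produce a real anti-self-dual $\r$-connection on $\Lambda^2H$, realized as a direct image. What your route needs, and what you only assert, is the reduction step: that the $\Hq^{\!*}$-compatible $\r$-connections on $H$ inducing the trivial flat connection on $Y$ form a nonempty affine space mapped bijectively, by taking traces, onto the real $\r$-connections on $\Lambda^2H$. This is true, but the reason is that the trivialization $Y=M\times S^2$ reduces the structure group of $H\setminus0$ from $\Hq^{\!*}$ to $\R^{*}$ (the kernel of $\Hq^{\!*}\to{\rm SO}(3)$), relative to which only the scalar part of the connection form remains free; the phrase ``takes values in $\R\subseteq\Hq$'' only makes sense relative to that reduction, so it should be spelled out. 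The paper's base case buys an explicit canonical connection (used later to define hypercomplex pluriharmonicity); yours buys a shorter derivation at the price of invoking the generalized Ward transform for the whole construction rather than only for the twist $\mathcal{L}_1\otimes\mathcal{L}^*$.

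The uniqueness half has a genuine gap. The twistor/Ward correspondence attaches to a line bundle with a real anti-self-dual $\r$-connection a holomorphic line bundle over $Z$ only up to isomorphism, and recovers the connection only up to gauge; so even granting that both of your recovered holomorphic line bundles are isomorphic to $\mathcal{L}^{\otimes2}\otimes\phi^*\bigl(\ol(2)\bigr)$, you could conclude only that the two connections on $\Lambda^2H$ are gauge-equivalent, not equal. Moreover the premise itself is unsupported: a second real anti-self-dual $\r$-connection on $\Lambda^2H$ differs from the first by a real $1$-form $\alpha$ with anti-self-dual differential, and it induces a \emph{different} holomorphic structure on the pushdown of $\p^*(\Lambda^2H)$; your displayed identity identifies the underlying smooth bundle, not the holomorphic structure, so nothing in your argument forces that second holomorphic line bundle to be $\mathcal{L}^{\otimes2}\otimes\phi^*\bigl(\ol(2)\bigr)$. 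Closing this requires showing directly that $\alpha=0$, i.e., exhibiting the normalization that singles out the canonical connection; note that the paper's own proof is, strictly speaking, only an existence argument, so this is the point at which your write-up would have to add something rather than merely reorganize.
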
 
\begin{proof} 
On identifying $S^2=\C\!P^1$, assume, firstly, that $\mathcal{L}=\phi^*\bigl(\ol(-1)\bigr)$\,, and recall that the space of sections of $H^*$ 
over any open subset $U\subseteq M$ is, by definition, the space of sections of $\mathcal{L}^*$ over $\p^{-1}(U)$ 
which are holomorphic when restricted to the fibres of $\p$. We shall denote in the same way the local sections of $H^*$ 
and the corresponding local sections of $\mathcal{L}^*$.\\ 
\indent 
Let $X\in T_xM$,  $(x\in M)$, and let $\widetilde{X}$ be tangent to the fibres of $\phi\circ\psi$\,, along $\p^{-1}(x)$\,, 
and such that $\dif\!\p(\widetilde{X})=X$. As $\psi^{*\!}\mathcal{L}$ is trivial along the fibres of $\phi\circ\psi$ we 
may define $\widetilde{X}(s)$\,, for any local section $s$ of $H^*$, thus obtaining a holomorphic section of $\mathcal{L}^*$ along $\p^{-1}(x)$\,; 
that is, an element of $H^*_x$\,. We have, thus, obtained a connection on $H^*$ with respect to which the sections obtained as pull backs 
through $\phi\circ\psi$ of holomorphic sections of $\ol(1)$ are covariantly constant. As these sections generate each fibre of $H^*$, 
this shows that the obtained connection on $H^*$ is trivial. Dualizing, we obtain a trivializing flat connection on $H$.\\ 
\indent 
Now, let $\mathcal{L}_1$ be any other holomorphic line bundle over $Z$, whose restriction to each twistor sphere has Chern number $-1$, 
and endowed with an antiholomorphic anti-involution covering $\s$. 
Let $F^{\C\!}=H_1\otimes E_1$ be the induced decomposition. 
Then the Ward tranform admits a straight generalization to this setting to show that $\mathcal{L}_1\otimes\mathcal{L}^*$ 
corresponds to a complex line bundle $L$\,, over $M$, endowed with an anti-self-dual $\r$-connection. 
Consequently, $H_1=H\otimes L$\,, which on endowing with the tensor product $\r$-connection completes the proof.   
\end{proof}  

\begin{rem} \label{rem:line_bundle_of_H} 
Let $Z$ be the twistor space of a quaternionic object $(M,F,\r,c)$\,, with a decomposition $F^{\C\!}=H\otimes E$ 
corresponding to a holomorphic line bundle $\mathcal{L}$ over $Z$.\\ 
\indent  
As we have seen, $H\setminus0$ is a hypercomplex object so that there exists a holomorphic submersion $\phi$ from its twistor space 
$Z(H\setminus0)$ onto $\C\!P^1$.\\ 
\indent 
Also, $\p_H:H\setminus0\to M$ is a twistorial map, corresponding to a surjective holomorphic submersion from $Z(H\setminus0)$ 
onto $Z$. Then the pull back through this submersion of $\mathcal{L}$ is $\phi^*(\ol(-1))$\,. 
\end{rem}  

\indent 
By using Proposition \ref{prop:hyper_right_connection} and Remark \ref{rem:line_bundle_of_H}\,, we can now define the relevant 
`quaternionic pluriharmonic' sheaf, in two steps, as follows. 

\begin{defn} \label{defn:q_h_hyper} 
Let $(M,F,\r,c)$ be a hypercomplex object with twistor space $Z$, and a decomposition $F^{\C\!}=H\otimes E$ 
corresponding to a holomorphic line bundle $\mathcal{L}$ over~$Z$.\\ 
\indent  
Let $I$ and $J$ be anti-commuting admissible linear complex structures on $F$ given by constant sections of $Y(=PH=M\times S^2)$\,. 
Then $\Cal=\r\bigl({\rm ker}(I+{\rm i})\bigr)$ is a complex distribution on $M$ over which the $\r$-connection on $H$ 
induces a partial connection. As $\Cal$ is integrable, we may define the exterior covariant differential $\overline{\partial}$ mapping 
$(\Lambda^2H)$-valued $r$-forms on $\Cal$ to $(\Lambda^2H)$-valued $(r+1)$-forms on $\Cal$\,, $(r\in\mathbb{N})$\,.\\ 
\indent 
A section $f$ of $\Lambda^2H$ is \emph{hypercomplex pluriharmonic (with respect to the pair $(I,J)$)} if 
$\bigl(\overline{\partial}\circ J\circ\partial\bigr)(f)=0$\,. 
\end{defn} 

\indent 
For the particular case of (classical) hypercomplex manifolds, the pluriharmonic functions, also, appear in \cite{AleVer-hyper-psh}\,. 
However, the operator defining them is due to \cite{W'd'se-85}\,. 

\begin{rem} \label{rem:q_h_hyper} 
With the same notations as in Definition \ref{defn:q_h_hyper}\,, suppose that the connection on $H$ together with some $\r$-connection on $E$ 
induces a torsion free $\r$-connection $\nabla$ on $F$. 
Then a quick calculation shows that, for any section $f$ of $\Lambda^2H$, the following assertions are equivalent 
 (recall that, in dimension four, a linear quaternionic structure is just an oriented linear conformal structure):\\ 
\indent 
\quad(i) $f$ is hypercomplex pluriharmonic;\\  
\indent 
\quad(ii) $\nabla^{2\!}f$ restricted to any quaternionic subspace,  of real dimension four, of any fibre of $F$, 
is trace free.\\ 
\indent  
In particular, in this case, the notion of hypercomplex pluriharmonicity does not depend of the pair $(I,J)$\,.  
\end{rem}  

\begin{defn} \label{defn:q_h} 
Let $(M,F,\r,c)$ be a quaternionic object with twistor space $Z$, and a decomposition $F^{\C\!}=H\otimes E$ 
corresponding to a holomorphic line bundle $\mathcal{L}$ over~$Z$.\\ 
\indent 
A section $f$ of $\Lambda^2H$ is \emph{quaternionic pluriharmonic} if the corresponding equivariant function $\widetilde{f}:H\setminus0\to\C$  
is hypercomplex pluriharmonic. 
\end{defn} 

\indent
In the particular cases of hyper-K\"ahler and quaternionic manifolds, our definition reduces to the ones considered 
in \cite{HarLaw-some_psh} and \cite{Ale-q_pluripotential}\,, respectively (note that, the latter is based on \cite{Bas-q_complexes}\,).\\ 
\indent 
The fact that $\Hq^{\!*}$ acts by twistorial diffeomorphisms on $H\setminus0$ implies that Definition \ref{defn:q_h} 
does not depend of the choice of $I$ and $J$ on $H\setminus0$\,.

\section{The Penrose transform} \label{section:qPt} 

\indent 
Let $(M,F,\r,c)$ be a quaternionic object with twistor space $Z$, and a decomposition $F^{\C\!}=H\otimes E$ 
corresponding to a holomorphic line bundle $\mathcal{L}$ over~$Z$. We make the following definition, 
where $\psi:Y(=PH)\to Z$ is the corresponding surjective co-CR submersion (recall  the diagram \eqref{diagram:for_Penrose}\,).  

\begin{defn} 
The \emph{Penrose transform} is the complex linear map associating to each $\g\in H^1\bigl(Z,\mathcal{L}^2\bigr)$ the section  
$\bigl(\int_{\p^{-1}(x)}\psi^*\g\bigr)_{x\in M}$ of $\Lambda^2H$. 
\end{defn} 

\indent 
The injectivity of the Penrose transform is quite straightforward. 

\begin{prop} \label{prop:qPt_inj} 
If the fibres of $\psi$ are connected then, by passing to an open neighbourhood of each point of $M$, the Penrose transform is injective.  
\end{prop}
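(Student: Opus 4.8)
The Penrose transform sends $\gamma\in H^1(Z,\mathcal{L}^2)$ to the section of $\Lambda^2H$ whose value at $x$ is $\int_{\p^{-1}(x)}\psi^*\gamma$. Since $\psi$ restricted to each fibre $Y_x=\p^{-1}(x)$ is an embedding of a Riemann sphere into $Z$ with normal bundle the holomorphic vector bundle of $(T_xM,E_x,\r_x)$, and $\mathcal{L}$ has Chern number $-1$ on each twistor sphere, so $\mathcal{L}^2$ has Chern number $-2$, the restriction $\psi^*\gamma$ lies in $H^1(Y_x,\ol(-2))$, which is one-dimensional. The value $\int_{\p^{-1}(x)}\psi^*\gamma$ is exactly the image of $\gamma$ under the composite $H^1(Z,\mathcal{L}^2)\to H^1(Y_x,\ol(-2))\cong(\Lambda^2H)_x$, using Serre duality $H^1(\C\!P^1,\ol(-2))\cong\C$ together with the identification of the fibre of $\Lambda^2H$ as in Section~\ref{section:quaternions} (recall $Y=PH$, so $H$ restricts on $Y_x$ to a bundle of Chern number $-1$ and $H^0(Y_x,\ol(1))^*\otimes H^0(Y_x,\ol(1))^*$ relates to $\Lambda^2H_x$). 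So the Penrose transform vanishes on $\gamma$ precisely when the restriction of $\gamma$ to every twistor sphere is zero in $H^1(Y_x,\ol(-2))$.

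First I would set up the double fibration and a Leray/\v Cech argument. Cover $Z$ by opens $\{U_i\}$ on which $\mathcal{L}^2$ is trivialized, and represent $\gamma$ by a \v Cech cocycle $\{\gamma_{ij}\}$. Pulling back to $Y$ via $\psi$ and then pushing down along $\p$: the key observation is that $\psi$ is a submersion onto $Z$ with $(\ker\dif\!\psi)^{\C}=\Cal\cap\overline{\Cal}$, and along each fibre $Y_x$ the map $\psi$ is an embedding, so we can choose the cover of $Z$ so that each $\psi^{-1}(U_i)$ meets each fibre $Y_x$ in a disc (the standard two-set cover of $\C\!P^1$ by the complements of the two "poles", transported by the family). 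On such a cover $H^1(Y_x,\ol(-2))$ is computed by a single transition function, and $\int_{Y_x}\psi^*\gamma$ is, up to the fixed Serre-duality isomorphism, the coefficient extracted from $\gamma_{01}$ restricted to $Y_x\cap\psi^{-1}(U_0)\cap\psi^{-1}(U_1)$.

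Then I would run the vanishing argument. Suppose the Penrose transform of $\gamma$ is $0$, i.e. $\gamma|_{Y_x}=0$ in $H^1(Y_x,\ol(-2))$ for all $x$. I want to conclude $\gamma=0$ in $H^1(Z,\mathcal{L}^2)$ after shrinking $M$ (hence shrinking $Z$ around a twistor sphere). The mechanism: the direct image sheaf argument — consider $R^q\p_*\psi^*(\mathcal{L}^2)$; by the fibrewise vanishing $H^0(Y_x,\ol(-2))=0$ we get $\p_*\psi^*(\mathcal{L}^2)=0$, while $R^1\p_*\psi^*(\mathcal{L}^2)$ is the line bundle $\Lambda^2H$ over $M$. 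A Leray spectral sequence for $\p\circ(\text{the map collapsing the }\psi\text{-fibres})$, or more concretely the exact sequence relating $H^1(Z,\mathcal{L}^2)$ to $H^0(M,R^1\p_*(\cdots))$, identifies the Penrose transform with (an injection into) the natural edge map; injectivity of that edge map requires $H^1(M,\p_*\psi^*\mathcal{L}^2)=0$, which holds because that sheaf is $0$. Making this precise: on a sufficiently small (Stein, or at least cohomologically trivial in the relevant degrees) neighbourhood $U$ of a point of $M$, with $Z_U$ the corresponding piece of the twistor space, the hypercohomology/Leray argument gives $H^1(Z_U,\mathcal{L}^2)\hookrightarrow H^0(U,\Lambda^2H)$ and the inclusion is exactly the Penrose transform, so its kernel is trivial.

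**Main obstacle.** The delicate point is justifying that the double fibration — where $\psi$ is only a submersion (not proper, fibres possibly positive-dimensional) and $\p$ is proper with Riemann-sphere fibres — still yields the clean Leray identification $H^1(Z_U,\mathcal{L}^2)\cong H^0(U,\Lambda^2H)$ after shrinking $U$. One must control $R^0$ and $R^1$ of the push-forward along $\psi$ (so that the pullback $\psi^*$ on $H^1$ doesn't lose or gain classes) — this uses that the $\psi$-fibres are the leaves of $\Cal\cap\overline{\Cal}$, i.e. $\psi$ identifies $Z$ with the leaf space — and then combine with the classical fibre-integration over the $\p$-fibres. Setting up the correct intermediate space and checking the connectedness hypothesis on the $\psi$-fibres is used precisely to guarantee $\psi$-pullback is injective on sheaf cohomology (locally constant along $\psi$-fibres, no monodromy obstruction), which is where the hypothesis in the statement is spent. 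The rest is the standard Penrose/Ward-type cohomological bookkeeping.
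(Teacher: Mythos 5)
Your route is sound but genuinely different from the paper's. The paper first treats the hypercomplex case by a \v Cech argument with an acyclic two-set cover of $Z$ (the Stein-ness of the two open sets is what \cite{Siu-Stein_nbds} is invoked for --- your cover is only described topologically, which is not enough to compute $H^1(Z,\mathcal{L}^2)$ by a single transition function), and then deduces the general case by pulling everything back to the hypercomplex object $H\setminus0$\,, where the holomorphic submersion $\phi:Z(H\setminus0)\to\C\!P^1$ supplies that cover. You instead work directly on the correspondence space $Y=PH$ over $M$ via the two edge maps $H^1(Z,\mathcal{L}^2)\to H^1_{\Cal}(Y,\psi^*\mathcal{L}^2)\to\Gamma(M,R^1\p_*\psi^*\mathcal{L}^2)=\Gamma(M,\Lambda^2H)$\,; your identifications $\p_*\psi^*(\mathcal{L}^2)=0$ and $R^1\p_*\psi^*(\mathcal{L}^2)=\Lambda^2H$ are correct, and the second edge map is injective for that reason. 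Two caveats. First, since $Y$ is only a co-CR manifold, the intermediate group must be the cohomology of the tangential complex of $\Cal$ (the involutive-structure spectral sequence of \cite{EGW}-type, filtered by $\p$), not ordinary sheaf cohomology; this is routine but should be said, and it is exactly the formalism the paper defers to the proof of Theorem \ref{thm:q_Penrose}\,. Second, the step you flag as the ``main obstacle'' --- injectivity of $\psi^*$ on $H^1$ --- is where the connectedness hypothesis is spent, as you say, but for injectivity it closes in one line and you should close it: if $\psi^*\a=\overline{\partial}_{\Cal}\b$ then $\b$ is annihilated by $\Cal\cap\overline{\Cal}=({\rm ker}\dif\!\psi)^{\C}$ and (being a function) by its conjugate, hence is constant on the connected fibres of $\psi$ and descends to a $\overline{\partial}$-potential for $\a$ on $Z$\,; it is only the surjectivity of $\psi^*$ (needed for part (ii) of Theorem \ref{thm:q_Penrose}\,, not here) that requires the contractibility result of \cite{EGW}\,. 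What your approach buys is a single uniform argument on $Y$ that also exhibits the image of the transform as the kernel of the first differential on the $E_1$-page; what the paper's reduction to $H\setminus0$ buys is that all cohomology stays honestly holomorphic, at the price of the auxiliary principal bundle.
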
 
\begin{proof} 
If $M$ is a hypercomplex object then this follows quickly from standard arguments, involving \v Cech cohomology and by using \cite{Siu-Stein_nbds}\,.  
In general, through the pull back, the proof for $H\setminus0$ gives, in particular, the required injectivity, locally, on $M$. 
\end{proof} 
  
\indent 
We say that the quaternionic object $(M,F,\r,c)$ is of \emph{constant type} if the co-CR quaternionic vector spaces $(T_xM,F_x,\r_x)$\,, $(x\in M)$\,, are the 
same, up to isomorphisms.\\ 
\indent 
Here is the main result of this paper.   

\begin{thm} \label{thm:q_Penrose} 
Let $(M,F,\r,c)$ be a quaternionic object with twistor space $Z$, and a decomposition $F^{\C\!}=H\otimes E$ 
corresponding to a holomorphic line bundle $\mathcal{L}$ over~$Z$.\\ 
\indent 
The following statements hold:\\ 
\indent 
\quad{\rm (i)} If on $H\setminus0$ the sheaf of quaternionic pluriharmonic functions is equal to the sheaf of hypercomplex pluriharmonic functions 
(in particular, if $H\setminus0$ admits a compatible torsion free $\r$-connection) 
then the image of the Penrose transform is contained in the space of quaternionic pluriharmonic sections of $\Lambda^2H$.\\ 
\indent 
\quad{\rm (ii)} If $(M,F,\r,c)$ is of constant type, then, by passing, if necessary, to an open neighbourhood of each point of $M$, 
the Penrose transform admits a partial section defined on the space of quaternionic pluriharmonic sections of $\Lambda^2H$. 
\end{thm}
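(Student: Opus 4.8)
The plan is to reduce both parts to the hypercomplex object $H\setminus0$\,, where, by Proposition~\ref{prop:hyper_right_connection} and the flatness of $Y$\,, both the pluriharmonicity operator and the Penrose transform are explicit, and then to transport the conclusions back to $M$ along the twistorial submersion $\p_H\colon H\setminus0\to M$\,. By Remark~\ref{rem:line_bundle_of_H}\,, $\p_H$ corresponds to a holomorphic submersion $\chi\colon Z(H\setminus0)\to Z$ with $\chi^*\mathcal{L}=\phi^*\bigl(\ol(-1)\bigr)$\,, where $\phi\colon Z(H\setminus0)\to\C\!P^1$ is as in Proposition~\ref{prop:twistor_hyper}\,; moreover $\chi$ is the twistorial counterpart of the $\Hq^{\!*}$-quotient $\p_H$\,, so that a class over $Z(H\setminus0)$ descends (via $\chi^*$) to $Z$ precisely when it transforms correctly under the $\Hq^{\!*}$-action. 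The first step is the naturality of the Penrose transform $\mathcal{P}$ under $\p_H$\,: for $\g\in H^1(Z,\mathcal{L}^2)$ the equivariant function $\widetilde f\colon H\setminus0\to\C$ determined by $f=\mathcal{P}(\g)$ is the Penrose transform, formed for the hypercomplex object $H\setminus0$ with the line bundle $\phi^*\bigl(\ol(-1)\bigr)$\,, of $\chi^*\g\in H^1\bigl(Z(H\setminus0),\phi^*(\ol(-2))\bigr)$\,. This is base change for fibre integration: in the diagram~\eqref{diagram:for_Penrose}\,, the fibres of $p\colon H\setminus0\to Y$ go, via $\psi_H$\,, isomorphically onto the fibres of $\mathcal{L}\setminus0\to Z$\,, and each twistor sphere of $H\setminus0$ maps by $\chi$ isomorphically onto a twistor sphere of $Z$\,, so integration along fibres commutes with the pull backs.

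The heart of the argument is the hypercomplex case, which I would handle along the lines of~\cite{W'd'se-85}\,, by means of the cohomology result of~\cite{EGW}\,. Fixing a constant admissible pair $(I,J)$ on the trivial $Y$\,, choose a \v Cech (or Dolbeault) representative of $\chi^*\g$ that is adapted to the twistor spheres of $H\setminus0$ and holomorphic in the twistor directions; integrating it along the corresponding fibres of $\p$ produces $\widetilde f$\,, while the twistor-holomorphicity, pushed through the vanishing --- supplied by~\cite{EGW} --- of the relevant cohomology along the twistor spheres and along a neighbourhood of them, forces $\bigl(\overline{\partial}\circ J\circ\partial\bigr)\widetilde f=0$\,; the $\Hq^{\!*}$-equivariance of the construction (see the end of Section~\ref{section:q-ph}\,) makes this identity independent of $(I,J)$\,, that is, $\widetilde f$ is quaternionic pluriharmonic on $H\setminus0$\,. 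In fact~\cite{EGW} should yield more, namely that $\g\mapsto\widetilde f$ is an \emph{isomorphism} onto the space of such functions; this is the step I expect to be the main obstacle, both because it rests on the precise direct-image computation and because part~(ii) uses its surjectivity.

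Part~(i) now follows: by the naturality of Step~1 and the hypercomplex case, $\widetilde f$ is quaternionic pluriharmonic on $H\setminus0$\,, and the hypothesis of~(i) says that on $H\setminus0$ this is the same as hypercomplex pluriharmonicity of $\widetilde f$\,, which, by Definition~\ref{defn:q_h}\,, is exactly the assertion that $f$ is a quaternionic pluriharmonic section of $\Lambda^2H$\,. (The parenthetical sufficient condition is Remark~\ref{rem:q_h_hyper}\,: a compatible torsion free $\r$-connection on $H\setminus0$ forces both notions to coincide with trace-freeness of the second covariant differential along quaternionic four-dimensional subspaces.)

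For part~(ii)\,, shrink $M$ so that $\mathcal{L}$ and $\t$ are available, and use that $(M,F,\r,c)$ is of constant type to guarantee that the direct-image sheaves and operators appearing in the~\cite{EGW} computation are of constant rank, so that the isomorphism of the second paragraph is available in the $\chi$-relative form needed over $Z$\,. Given a quaternionic pluriharmonic section $f$ of $\Lambda^2H$\,, lift it to the hypercomplex pluriharmonic function $\widetilde f$ on $H\setminus0$ and apply this isomorphism to obtain the unique class $\widetilde\g\in H^1\bigl(Z(H\setminus0),\phi^*(\ol(-2))\bigr)$ whose Penrose transform is $\widetilde f$\,. Since $\widetilde f$ comes from a section over $M$\,, it transforms under the $\Hq^{\!*}$-action in the way required of a pull back along $\chi$\,; by uniqueness $\widetilde\g$ inherits this, hence $\widetilde\g=\chi^*\g$ for some $\g\in H^1(Z,\mathcal{L}^2)$\,, unique by the injectivity of the Penrose transform (cf.\ Proposition~\ref{prop:qPt_inj}\,). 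By Step~1 the equivariant function of $\mathcal{P}(\g)$ is $\widetilde f$\,, so $\mathcal{P}(\g)=f$\,, and $f\mapsto\g$ is the desired partial section. Here the delicate points, beyond the surjectivity already flagged, are the descent of $\widetilde\g$ along $\chi$ and the uniformity over $M$ that makes the~\cite{EGW} computation go through; both are where the constant-type hypothesis is used.
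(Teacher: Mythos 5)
Your reduction to the hypercomplex object $H\setminus0$ and your treatment of part (i) follow the paper's route: pull back to $H\setminus0$, where the hypothesis of (i) lets hypercomplex pluriharmonicity stand in for the quaternionic one, and \v Cech-cohomological integral formulae of the type \eqref{e:Penrose_prehistory} give the containment. The problem is part (ii), where you yourself flag the decisive step --- that on the hypercomplex object the Penrose transform is an \emph{isomorphism} onto the pluriharmonic functions --- as ``the main obstacle'' and then use it without supplying it. That step is the entire content of (ii), and the paper's mechanism for it is absent from your proposal. What the paper actually does is complexify $M$ and replace diagram \eqref{diagram:for_Penrose} by the double fibration \eqref{diagram:for_Penrose_complex} through ${\rm GL}(H)$ and $(Y+Y)\setminus Y$ over $M^{\C}$, with the two projections $\psi_1,\psi_2$ onto $Z$. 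The point is that the pluriharmonicity equation $(\partial_1\circ J\circ\partial_2)(\widetilde{f}\,)=0$ says \emph{verbatim} that the partial $1$-form $(J\circ\partial_2)(\widetilde{f}\,)$ is a $1$-cocycle for the relative de~Rham cohomology of $\psi_1$, and the transformation law \eqref{e:cocycle_invariance}, $R_{(\l_1,\l_2)}^*\bigl((J\circ\partial_2)(\widetilde{f}\,)\bigr)=\l_1^{-2}(J\circ\partial_2)(\widetilde{f}\,)$, identifies the coefficient bundle as $\mathcal{L}^2$. This produces a class in $H_{\psi_1\!}^1\bigl(Z,\mathcal{L}^2\bigr)$ directly over $Z$ --- so the $\Hq^{\!*}$-descent along $\chi$ that you (rightly) worry about never arises --- and integration along the fibres of $\p_H$ recovers $f$ up to a nonzero constant. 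The theorem of \cite{EGW} then gives $H^1\bigl(Z,\mathcal{L}^2\bigr)=H_{\psi_1\!}^1\bigl(Z,\mathcal{L}^2\bigr)$ locally, provided the fibres of $\psi$ are contractible.

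This last proviso is also where you misplace the constant-type hypothesis. It is not there to make direct-image sheaves of constant rank; it is used to arrange that the images through $\p_H$ of the fibres of $\psi$ are totally geodesic for a classical connection on $M^{\C}$, so that over a convex neighbourhood the fibres of $\psi$ are contractible and the \cite{EGW} comparison applies. Without the relative de~Rham construction and this use of constant type, your part (ii) is a plan rather than a proof: the asserted isomorphism on $H\setminus0$, the descent of $\widetilde\g$ along $\chi$, and the uniformity over $M$ are all left open, and they are precisely the points the paper's argument is built to handle.
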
  

\indent 
The proof of Theorem \ref{thm:q_Penrose}\,, essentially, relies on the fact that it is sufficient to prove it for hypercomplex objects. 
To show this, we need the following result. 

\begin{prop} \label{prop:q_a_connection} 
Let $(M,F,\r,c)$ be a quaternionic object with twistor space $Z$, and a decomposition $F^{\C\!}=H\otimes E$ 
corresponding to a holomorphic line bundle $\mathcal{L}$ over~$Z$.\\ 
\indent 
Then there exists a principal $\r$-connection $C:\p_H^*F\to T(H\setminus0)$ on $(H\setminus0,M,\Hq^{\!*})$ such that the associated 
$\r$-connection on $Y(=PH)$ induces the same co-CR structure on $Y$ as the one defining $\psi:Y\to Z$. 
\end{prop}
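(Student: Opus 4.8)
The plan is to construct $C$ by a patching/averaging argument starting from the data already available. Recall that $Y=PH$ carries the co-CR structure $\Cal$ defining $\psi:Y\to Z$, and by hypothesis $(M,F,\r,c)$ is a quaternionic object, so in particular we already have \emph{some} $\r$-connection $c$ on $Y$ compatible with the structural group $\mathrm{SO}(3)$, whose associated co-CR structure is the given one. The first step is to lift this to the principal bundle: I would observe that $H\setminus0\to Y$ is a principal $\C^{\!*}$-bundle and, using the fact that $\psi^*(\mathcal L\setminus0)=H\setminus0$ as principal $\C^{\!*}$-bundles over $Y$ (noted in Section \ref{section:q-ph}), the holomorphic structure along the fibres of $\p$ together with the pull-back of a connection on $\mathcal L$ over $Z$ provides a partial connection on $H\setminus0$ over the distribution $\r(E_x)$ inside each fibre direction. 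Concretely, one wants a principal $\r$-connection $C:\p_H^*F\to T(H\setminus0)$, and the requirement on it is only that its projection to $T(PH)$ recovers $c$ (equivalently, recovers the co-CR structure $\Cal$); so the construction has genuine freedom and the task is an existence statement, not a uniqueness one.

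The key steps, in order: (1) Reduce to a local statement — since $\r$-connections form an affine space over sections of $\mathrm{Hom}(F,\mathrm{ad}\,P)$ (here $\mathrm{ad}$ of the $\Hq^{\!*}$-bundle), existence is a partition-of-unity question once local models are produced. (2) Produce the local model: over a chart $U\subseteq M$ trivialize $H$, write $H\setminus0|_U \cong U\times\Hq^{\!*}$, and use the existing $\r$-connection $c$ on $Y=PH$ together with the $\C^{\!*}$-bundle connection on $H\setminus0\to Y$ coming from the holomorphicity along $\p$-fibres and from a chosen connection on $\mathcal L\to Z$ pulled back via $\psi$; the horizontal distribution on $H\setminus0$ over $M$ is then obtained as the $\r$-image combined via the splitting $T(H\setminus0)\to T(PH)$ provided by that $\C^{\!*}$-connection. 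One checks this is $\Hq^{\!*}$-invariant by averaging over the compact part $\mathrm{Sp}(1)\subseteq\Hq^{\!*}$ (the noncompact radial $\R^{>0}$-direction is automatically handled since scaling commutes with everything). (3) Verify the defining compatibility $\widetilde{\dif\!\p_H}\circ C=\p_H^*\r$ and that the associated $\r$-connection on $PH$ agrees with $c$: the first is immediate from the construction of the horizontal lift, and the second holds because the $\C^{\!*}$-bundle connection used to split $T(H\setminus0)$ projects the lift of $c$ back to $c$. (4) Patch the local $C$'s with a partition of unity subordinate to $U$'s; affineness of the space of $\r$-connections, plus the fact that the \emph{associated} $\r$-connection on $PH$ depends affinely on $C$ and all local pieces induce the same $c$, guarantees the glued object is again a principal $\r$-connection inducing $c$, hence $\Cal$.

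I expect the main obstacle to be step (2)–(3): namely, arranging that the horizontal lift one builds on $H\setminus0$ is simultaneously $\Hq^{\!*}$-invariant \emph{as a distribution on a principal $\Hq^{\!*}$-bundle} and projects down exactly to the prescribed co-CR structure $\Cal$ on $PH$ — the subtlety is that $\Hq^{\!*}$ is the full quaternionic multiplicative group (not just $\C^{\!*}$), and the averaging must be compatible with the $\mathrm{SO}(3)$-action on the sphere bundle $Y$ rather than destroying the identification of $\dif\!\p_H$-images. One must check that averaging over $\mathrm{Sp}(1)$ does not move the $\C^{\!*}$-subconnection (it does not, because that subconnection is canonically determined by the complex structure along $\p$-fibres together with $\t$, which are $\mathrm{Sp}(1)$-related but the induced distribution is invariant), and that the resulting $T(PH)$-projection is still $c$; once this invariance bookkeeping is done, the rest is the routine affine-patching argument. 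A secondary technical point is to make sure the connection chosen on $\mathcal L\to Z$ can be taken $\t$-compatible, so that the whole construction descends consistently — but this only requires averaging a connection on a line bundle under an antiholomorphic involution, which is standard.
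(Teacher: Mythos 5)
Your strategy --- lift the given $\r$-connection from $Y=PH$ to $H\setminus0$ via a $\C^{\!*}$-connection on $H\setminus0\to Y$, force $\Hq^{\!*}$-invariance by averaging over ${\rm Sp}(1)$, then patch --- is genuinely different from the paper's, and it has a gap at the averaging step. The property you must preserve is that the associated connection on $Y$ induces the co-CR structure $\Cal$\,; for an $\Hq^{\!*}$-invariant $C$ this is equivalent to the single-point condition that, at each $u\in H\setminus0$\,, the map $C(u,\cdot)$ sends $(u(qe_1))\otimes E$ into $(\Cal_H\cdot q^{-1})_u$ for \emph{every} $q\in\Hq^{\!*}$, i.e.\ $C$ must be adapted at $u$ to the whole $S^2$-family of co-CR structures simultaneously. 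Your initial lift $C_0$ is adapted, at each $u$\,, only to the one member of that family singled out by $p(u)\in Y$ (it sees $c$ only at the point $p(u)$\,, hence controls only the image of ${\rm ker}(J_{p(u)}+{\rm i})$); on the complementary subspaces $C_0(u,\cdot)$ depends on the arbitrary transverse part of the chosen connection on $\mathcal{L}$ and is unconstrained. After averaging, $C\bigl(u,(ue_1)\otimes s\bigr)=\int\dif\!R_{q^{-1}}\bigl(C_0\bigl(uq,((uq)(q^{-1}e_1))\otimes s\bigr)\bigr)\,dq$\,, and since $q^{-1}e_1$ has a nonzero $e_2$-component for $q\notin\C^{\!*}$, the integrand involves exactly those unconstrained values of $C_0$ at $uq$\,, transported back by $\dif\!R_{q^{-1}}$, which carries ${\rm ker}\dif\!p$ out of itself inside ${\rm ker}\dif\!\p_H$\,. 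There is no cancellation in general: already in the flat model, perturbing the $\C^{\!*}$-connection by an arbitrary $1$-form on $Z$ produces an averaged $C$ whose vertical part acquires components along the $(1,0)$-directions of ${\rm ker}\dif\!\p_H$\,, so the associated connection on $Y$ no longer induces $\Cal$\,. In short, ${\rm Sp}(1)$-invariance is purchased at the price of the very property you need. (The partition-of-unity step is fine --- the relevant conditions are convex --- but the difficulty is local existence, which is precisely what the averaging was supposed to deliver; note also that $\C^{\!*}$-invariance together with $\t$-compatibility alone would not suffice, as $\C^{\!*}$ and ${\rm j}$ do not generate $\Hq^{\!*}$.)

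The paper sidesteps all of this by never starting from a non-invariant object: it uses the fact, established in Section \ref{section:q-ph}\,, that $H\setminus0$ is itself a quaternionic object, hence carries an $\Hq^{\!*}$-equivariant $\r_H:F_H\to T(H\setminus0)$ already compatible with the whole family $[q]\mapsto\Cal_H\cdot q^{-1}$. The only choice to be made is an equivariant splitting of the surjection $E_H\to(\p_H)^*(E)$ --- possible because $\Hq^{\!*}$ acts only through the $(H\setminus0)\times\C^{\!2}$ tensor factor --- and $C$ is obtained by restricting $\r_H$ to the resulting copy of $(\p_H)^*(F)$ inside $F_H$\,. If you want to retain your viewpoint, the fix is to build the local model by lifting the full $\r$-connection on $F$ (equivalently, the principal connection on the quaternionic frame bundle of $F$), not its associated connection on $Y$ point by point; doing so essentially reproduces the paper's argument.
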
 
\begin{proof} 
This is a consequence of the fact that $\p_H:H\setminus0\to M$ is twistorial and in particular, at each point, its differential is a co-CR quaternionic linear map 
(see \cite{fq,fq_2} for the definition of the latter). Therefore, on denoting by $(F_H,\r_H)$ the almost co-CR quaternionic structure of $H\setminus0$ 
we have $\dif\!\p_H\circ\r_H=\r\circ\Pi$ where $\Pi:F_H\to F$ is a morphism of quaternionic vector bundles, covering $\p_H$\,. 
Furthermore, $\Pi$ factorises into a morphism of quaternionic vector bundles $\widetilde{\Pi}:F_H\to(\p_H)^*(F)$ followed by the bundle-map $(\p_H)^*(F)\to F$.\\ 
\indent 
Now, $(\p_H)^*\bigl(F^{\C\!}\bigr)=\bigl((H\setminus0)\times\C^{\!2}\bigr)\otimes E$, whilst $F_H^{\C\!}=\bigl((H\setminus0)\times\C^{\!2}\bigr)\otimes E_H$\,, 
for some complex vector bundle $E_H$ over $H\setminus0$\,, such that, $\widetilde{\Pi}$ is given by the tensor product of the identity morphism of 
$(H\setminus0)\times\C^{\!2}$ and a morphism of complex vector bundles from $E_H$ onto $(\p_H)^*(E)$\,. Hence, by taking a section of the latter, 
we may embedd $(\p_H)^*(F)\subseteq F_H$\,, as a quaternionic subbundle, so that the restriction of $\widetilde{\Pi}$ to it is the identity morphism.\\ 
\indent 
On the other hand, as a structural group, $\Hq^{\!*}$ induces right actions on $F_H$ and on $T(H\setminus0)$\,, 
where the former is given by the action on (the first components) of $(\p_H)^*(H)\bigl(=(H\setminus0)\times\C^{\!2}\bigr)$. 
Moreover, $\r_H$ is equivariant with respect to these actions.\\ 
\indent 
Then the restriction $C$ of $\r_H$ to $(\p_H)^*(F)$ is a principal $\r$-connection, as claimed.     
\end{proof} 

\indent 
Another ingredient needed to prove Theorem \ref{thm:q_Penrose} is a natural extension, to this setting, of the classical notion 
of `standard (horizontal) vector field'. For this, let $(M,F,\r,c)$ be a quaternionic object with twistor space $Z$, and a decomposition $F^{\C\!}=H\otimes E$ 
corresponding to a holomorphic line bundle $\mathcal{L}$ over $Z$. Endow $H\setminus0$ with a principal $\r$-connection $C$ as in 
Proposition \ref{prop:q_a_connection}\,.  

\begin{defn} \label{defn:q_standard} 
For any $\xi\in\C^{\!2}$ let $B(\xi)$ be the morphism of complex vector bundles, from $(\p_H)^*(E)$ to $T^{\C\!}(H\setminus0)$ 
given by $B(\xi)(u,s)=C\bigl(u,(u\xi)\otimes s\bigr)$\,, for any $u\in H\setminus0$ and $s\in E_{\p_H(u)}$\,. 
\end{defn} 
 
\begin{prop} \label{prop:q_standard_invariance} 
On denoting by $R_q$ the right translation by $q\in\Hq^{\!*}$ on $H\setminus0$ (and $(\p_H)^*(E)$\,), we have 
$\dif\!R_q\circ B(\xi)=B(q^{-1}\xi)\circ R_q$\,,  for any $\xi\in\C^{\!2}$. 
\end{prop}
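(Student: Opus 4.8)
The plan is to unwind the definitions and check the equivariance fibrewise, reducing everything to the behaviour of the principal $\r$-connection $C$ under the right $\Hq^{\!*}$-action. First I would recall that, by Proposition \ref{prop:q_a_connection}, $C$ is a \emph{principal} $\r$-connection on $(H\setminus0,M,\Hq^{\!*})$, hence $G$-invariant in the sense of the definition of principal $\r$-connection: for every $q\in\Hq^{\!*}$ one has $\dif\!R_q\circ C=C\circ(R_q^*\otimes\mathrm{Id})$, where $R_q^*$ denotes the induced action on $\p_H^*F$ (covering $R_q$ on $H\setminus0$ and the identity on $M$). On the complexified bundle $\p_H^*(F^{\C})=\bigl((H\setminus0)\times\C^2\bigr)\otimes(\p_H)^*(E)$ this induced action is, as recorded in the proof of Proposition \ref{prop:q_a_connection}, the tensor product of the right action of $\Hq^{\!*}$ on $\C^2$ (via $(\p_H)^*(H)=(H\setminus0)\times\C^2$) with the trivial action on $(\p_H)^*(E)$.

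Next I would simply compute both sides of the claimed identity on an element $(u,s)\in(\p_H)^*(E)$ with $u\in H\setminus0$ and $s\in E_{\p_H(u)}$. On the one hand, $\bigl(\dif\!R_q\circ B(\xi)\bigr)(u,s)=\dif\!R_q\bigl(C(u,(u\xi)\otimes s)\bigr)$. By $G$-invariance of $C$ this equals $C\bigl(R_q^*((u\xi)\otimes s)\bigr)=C\bigl(uq,((uq)\,(q^{-1}\xi))\otimes s\bigr)$, using that the class in $(\p_H)^*(H)$ of $u\xi$ over $u$ is carried by $R_q$ to the class of $(uq)(q^{-1}\xi)=u\xi$ over $uq$; here I am identifying the fibre of $(\p_H)^*(H)$ over any point of $H\setminus0$ with $\C^2$ via the given trivialization, under which right translation by $q$ acts on $\C^2$ by $\xi\mapsto q^{-1}\xi$, exactly as on $\Hq^{\!*}\!/\C^{\!*}$-type data. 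On the other hand, $\bigl(B(q^{-1}\xi)\circ R_q\bigr)(u,s)=B(q^{-1}\xi)(uq,s)=C\bigl(uq,((uq)(q^{-1}\xi))\otimes s\bigr)$. The two expressions coincide, which is the assertion.

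The only genuine point requiring care — and the step I expect to be the main (if modest) obstacle — is bookkeeping the identification of $(\p_H)^*(H)$ with the trivial $\C^2$-bundle over $H\setminus0$ and the precise form of the right $\Hq^{\!*}$-action on it, so that ``$u\xi$'' means the right thing and transforms as $\xi\mapsto q^{-1}\xi$. This is the tautological identification coming from $Y=PH$ and $\psi^*(\mathcal{L}\setminus0)=H\setminus0$ (compare Remark \ref{rem:Q-J} and the discussion around \eqref{diagram:for_Penrose}), under which a point $u\in H\setminus0$ sits in its own fibre $H_{\p_H(u)}=\C u+\C\t u\cong\C^2$, and right translation by $q=z_1+\mathrm{j}z_2$ sends $u$ to $z_1u+z_2\t u$; a short linear-algebra check in $\Hq$ gives that, in the $u$-frame, this is the map $\xi\mapsto q^{-1}\xi$ on $\C^2$. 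Once this normalization is fixed, the equivariance is a one-line consequence of the $G$-invariance built into the notion of principal $\r$-connection, and no further computation is needed.
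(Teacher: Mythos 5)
Your proof is correct and follows essentially the same route as the paper's: both reduce the claim to the $G$-invariance of the principal $\r$-connection $C$ together with the associativity identity $(u\cdot q)(q^{-1}\xi)=u\xi$. The extra bookkeeping you carry out on the trivialization $(\p_H)^*(H)=(H\setminus0)\times\C^{2}$ is exactly the normalization the paper uses implicitly, so nothing further is needed.
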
 
\begin{proof} 
Let $u\in H\setminus0$ and $s\in E_{\p_H(u)}$\,. By the equivariance of $C$, we have  
$$\dif\!R_q\bigl(B(\xi)(u,s)\bigr)=\dif\!R_q\bigl(C\bigl(u,(u\xi)\otimes s\bigr)\bigr)=C\bigl(u\cdot q,(u\xi)\otimes s\bigr)\;.$$  
\indent 
On the other hand, 
$$\bigl(B(q^{-1}\xi)\circ R_q\bigr)(u,s)=B(q^{-1}\xi)(u\cdot q,s)=C\bigl(u\cdot q, \bigl((u\cdot q)(q^{-1}\xi)\bigr)\otimes s\bigr)=C\bigl(u\cdot q,(u\xi)\otimes s\bigr)\;,$$ 
thus completing the proof. 
\end{proof} 

\indent 
The morphism of Lie algebras from $\Hq$ (seen as the Lie algebra of $\Hq^{\!*}$) to the Lie algebra of vector fields on $T(H\setminus0)$ extends 
to a morphism of complex Lie algebras from $A\in\mathfrak{gl}(2,\C\!)$ to the sections of $T^{\C\!}(H\setminus0)$\,. We shall denote in the same way, 
the elements of $\mathfrak{gl}(2,\C\!)$ and the corresponding (fundamental) complex vector fields on $H\setminus0$\,. 
Then, from Proposition \ref{prop:q_standard_invariance}\,, we quickly obtain the following fact. 

\begin{cor} \label{cor:q_standard_invariance} 
If $\xi\in\C^{\!2}$ and $s$ is a section of $E$ then the vector field $B(\xi)(s)$\,, which at any $u\in H\setminus0$ is equal to $B(\xi)(u,s_{\p_H(u)})$\,,  
satisfies $\bigl[A,B(\xi)(s)\bigr]=B(A\xi)(s)$\,, for any $A\in\mathfrak{gl}(2,\C\!)$\,. 
\end{cor}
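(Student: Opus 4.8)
The plan is to prove Corollary \ref{cor:q_standard_invariance} by differentiating the equivariance identity of Proposition \ref{prop:q_standard_invariance} along a one-parameter subgroup of $\Hq^{\!*}$, and then extending the resulting real identity to the complexification $\mathfrak{gl}(2,\C\!)$ by complex linearity. First I would fix $\xi\in\C^{\!2}$ and a section $s$ of $E$, and recall that, by construction, the vector field $B(\xi)(s)$ on $H\setminus0$ is the one whose value at $u$ is $C\bigl(u,(u\xi)\otimes s_{\p_H(u)}\bigr)$. For $X\in\Hq$ (the Lie algebra of $\Hq^{\!*}$), let $t\mapsto\exp(tX)$ be the corresponding subgroup and recall that the fundamental vector field $X$ on $H\setminus0$ generates the flow $R_{\exp(tX)}$; likewise $(\p_H)^*(E)$ carries the induced action whose infinitesimal generator we also denote $X$. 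Then $[X,B(\xi)(s)]$ is, by definition of the Lie bracket of vector fields, $\frac{\dif}{\dif t}\big|_{t=0}\bigl(\dif\!R_{\exp(-tX)}\circ B(\xi)(s)\circ R_{\exp(tX)}\bigr)$.

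The key step is then to substitute $q=\exp(tX)$ into the identity $\dif\!R_q\circ B(\xi)=B(q^{-1}\xi)\circ R_q$ of Proposition \ref{prop:q_standard_invariance}, rewrite it as $\dif\!R_{q^{-1}}\circ B(q^{-1}\xi)\circ R_q=B(\xi)$ — equivalently $\dif\!R_{\exp(-tX)}\circ B(\exp(-tX)\xi)(s)\circ R_{\exp(tX)}=B(\xi)(s)$ once we track the section $s$ through the (linear) action on $(\p_H)^*(E)$ — and differentiate at $t=0$. The right-hand side is constant, so differentiating the left-hand side and using the Leibniz rule for the composition, together with $\frac{\dif}{\dif t}\big|_{t=0}\exp(-tX)\xi=-X\xi$ and $\frac{\dif}{\dif t}\big|_{t=0}B(\exp(-tX)\xi)=B(-X\xi)=-B(X\xi)$ (since $\xi\mapsto B(\xi)$ is complex linear by Definition \ref{defn:q_standard}), yields $[X,B(\xi)(s)]-B(X\xi)(s)=0$ for all $X\in\Hq$. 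Finally, since both sides of the claimed identity $[A,B(\xi)(s)]=B(A\xi)(s)$ are complex linear in $A\in\mathfrak{gl}(2,\C\!)$ — the bracket because the fundamental-vector-field assignment $A\mapsto A$ was extended $\C$-linearly, and the right-hand side because $\xi\mapsto B(\xi)$ is $\C$-linear — and since $\Hq$ spans $\mathfrak{gl}(2,\C\!)$ over $\C$ (the quaternions are a real form of $M_2(\C\!)=\mathfrak{gl}(2,\C\!)$), the identity for $X\in\Hq$ forces it for all $A$.

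The main obstacle I anticipate is purely bookkeeping rather than conceptual: one must be careful that the action of $q\in\Hq^{\!*}$ appearing in Proposition \ref{prop:q_standard_invariance} acts on $(\p_H)^*(E)$ through $R_q$ but, since $F_H^{\C\!}$ was identified with $\bigl((H\setminus0)\times\C^{\!2}\bigr)\otimes E_H$ with $\Hq^{\!*}$ acting only on the $\C^{\!2}$-factor, the effect on the $E$-variable is just the pullback along $R_q$ with no twisting, so that the map $s\mapsto B(\xi)(s)$ interacts with $R_q$ exactly as written in Corollary \ref{cor:q_standard_invariance}. Once this identification is kept straight, differentiating the $q$-family at the identity and invoking $\C$-bilinearity to pass from $\Hq$ to $\mathfrak{gl}(2,\C\!)$ is routine and completes the proof.
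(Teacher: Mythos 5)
Your proof is correct and is precisely the argument the paper leaves implicit when it says the corollary follows "quickly" from Proposition \ref{prop:q_standard_invariance}: differentiate the equivariance identity along the flows $R_{\exp(tX)}$ for $X\in\Hq$ to get $[X,B(\xi)(s)]=B(X\xi)(s)$, then extend to all of $\mathfrak{gl}(2,\C\!)$ by the $\C$-linearity of both sides in $A$, using that $\Hq$ spans $\mathfrak{gl}(2,\C\!)$ over $\C$. The signs and the bookkeeping of the $E$-variable are handled correctly, so nothing further is needed.
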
 

\indent 
We now have a convenient way to write $\Cal_H=(\dif\!\psi_H)^{-1}\bigl(T^{0,1}(\mathcal{L}\setminus0)\bigr)$\,, namely, 
$\Cal_H=\mathcal{B}_H\oplus({\rm ker}\dif\!\p_H)^{0,1}$, where $\mathcal{B}_H$ is generated by $B(e_1)(s)$\,, for any (local) section $s$ of $E$, 
with $(e_1,e_2)$ the canonical basis of $\C^{\!2}$. 

\begin{prop} \label{prop:q_h_harmonic} 
Let $(M,F,\r,c)$ be a quaternionic object with twistor space $Z$, and a decomposition $F^{\C\!}=H\otimes E$ 
corresponding to a holomorphic line bundle $\mathcal{L}$ over $Z$. Endow $H\setminus0$ with a principal $\r$-connection $C$ as in 
Proposition \ref{prop:q_a_connection}\,.\\ 
\indent 
Then a section $f$ of $\Lambda^2H$ is quaternionic pluriharmonic if and only if the corresponding equivariant function $\widetilde{f}$ on $H\setminus0$ 
satisfies $\bigl(\bigl(\overline{\partial}\circ J\circ\partial\bigr)(\widetilde{f}\,)\bigr)(X,Y)=0$\,, for any $X,Y\in\mathcal{B}_H$\,. 
\end{prop}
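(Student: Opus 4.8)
The plan is to reduce the statement to the hypercomplex case and then unravel the definitions of $\overline{\partial}$, $J$, and $\partial$ in terms of the fundamental vector fields and the standard horizontal vector fields $B(\xi)(s)$. By Definition \ref{defn:q_h}, a section $f$ of $\Lambda^2H$ is quaternionic pluriharmonic precisely when the corresponding equivariant function $\widetilde{f}$ on $H\setminus0$ is hypercomplex pluriharmonic, so the content of the proposition is that the equation $\bigl(\overline{\partial}\circ J\circ\partial\bigr)(\widetilde f\,)=0$ on $H\setminus0$ (with respect to a pair $(I,J)$ of anti-commuting constant admissible complex structures, coming from constant sections of $Y(H\setminus0)=PH(H\setminus0)\times S^2$) is equivalent to the restricted equation $\bigl(\bigl(\overline{\partial}\circ J\circ\partial\bigr)(\widetilde f\,)\bigr)(X,Y)=0$ for $X,Y\in\mathcal{B}_H$. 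The key observation is that the distribution $\Cal=\r_H\bigl({\rm ker}(I+{\rm i})\bigr)$ on $H\setminus0$ (entering Definition \ref{defn:q_h_hyper}) splits, after pulling back to the total space of the bundle $H(H\setminus0)\setminus0$, or more simply when described via the $\r$-connection $C$ from Proposition \ref{prop:q_a_connection}, into the $\mathcal{B}_H$ part together with the vertical $({\rm ker}\dif\!\p_H)^{0,1}$ directions, and by Corollary \ref{cor:q_standard_invariance} the brackets of the $B(\xi)(s)$ with the fundamental fields $A\in\mathfrak{gl}(2,\C)$ stay inside this same family.

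First I would fix the pair $(I,J)$ on $H\setminus0$ and write out $\Cal_H=\mathcal{B}_H\oplus({\rm ker}\dif\!\p_H)^{0,1}$ explicitly as in the paragraph preceding the proposition, with $\mathcal{B}_H$ generated by $B(e_1)(s)$. The vertical part $({\rm ker}\dif\!\p_H)^{0,1}$ is spanned by (the $(0,1)$ components of) the fundamental vector fields coming from $\Hq^{\!*}$, i.e. by suitable elements of $\mathfrak{gl}(2,\C)$. So the full distribution $\Cal$ on $H\setminus0$ relevant to Definition \ref{defn:q_h_hyper} for the hypercomplex object $H\setminus0$ is spanned by the $B(e_1)(s)$ and by the fundamental fields in a complementary $(0,1)$-vertical subspace. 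Next I would evaluate $\partial\widetilde f$ on these generators: since $\widetilde f$ is $\Hq^{\!*}$-equivariant (it corresponds to a section of $\Lambda^2H$, on which $\Hq^{\!*}$ acts by the square of the scaling character), its derivatives along the fundamental vector fields are algebraically determined — they are just multiples of $\widetilde f$ itself. Hence $\partial\widetilde f$ is already determined by its values on the horizontal generators $B(e_1)(s)$, and similarly after applying $J$ and then $\overline{\partial}$, using Corollary \ref{cor:q_standard_invariance} to control the brackets $[A,B(\xi)(s)]=B(A\xi)(s)$ that arise when expanding the exterior covariant differential.

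The heart of the argument is then a bookkeeping computation: expand $\bigl(\overline{\partial}\circ J\circ\partial\bigr)(\widetilde f\,)(U,V)$ for $U,V$ ranging over all pairs of generators of $\Cal$ — both horizontal, one horizontal and one vertical, both vertical — and show that in every case where at least one of $U,V$ is a vertical fundamental field, the expression is a combination of the purely-horizontal values $\bigl(\overline{\partial}\circ J\circ\partial\bigr)(\widetilde f\,)(X,Y)$ with $X,Y\in\mathcal{B}_H$, together with terms that vanish identically by equivariance of $\widetilde f$. For this one uses that $J$ acts on $\mathcal{B}_H$ and on the vertical $(0,1)$-space compatibly with the splitting (it sends $B(e_1)(s)$-type fields to $B(e_2)(s)$-type fields, by the standard action of the complex structure on $\C^2$), that $\partial$ and $\overline{\partial}$ are the $(1,0)$ and $(0,1)$ components of the partial connection along $\Cal$, and that the only brackets entering are $[B(e_1)(s),B(e_1)(s')]$, $[A,B(e_1)(s)]=B(Ae_1)(s)$, and $[A,A']\in\mathfrak{gl}(2,\C)$, all of which preserve the generating set. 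A quick count then shows the second-order operator restricted to the vertical directions produces nothing beyond what the torsion-free/constant-coefficient normalization already forces, so vanishing on $\mathcal{B}_H\times\mathcal{B}_H$ is equivalent to vanishing on all of $\Cal\times\Cal$, i.e. to hypercomplex pluriharmonicity of $\widetilde f$; by Definition \ref{defn:q_h} this is exactly quaternionic pluriharmonicity of $f$.

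I expect the main obstacle to be handling the mixed horizontal–vertical and vertical–vertical components cleanly: one must verify that the connection term in $\overline{\partial}$ (which involves the $\r$-connection $C$ on $\Lambda^2H$, and in the hypercomplex case the anti-self-dual normalization of Proposition \ref{prop:hyper_right_connection}) contributes only through the horizontal directions, and that no genuinely new second-order constraint is hidden in the fully vertical part. The equivariance of $\widetilde f$ under $\Hq^{\!*}$ — more precisely, that $\widetilde f$ transforms under the character $q\mapsto(\det\text{-type})^{\pm}$ so that its vertical derivatives are scalar multiples of $\widetilde f$ — is what kills those potential extra terms, but pinning down the exact character and checking the cancellation against the $B(A\xi)$ terms from Corollary \ref{cor:q_standard_invariance} is where the real care is needed. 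Once that is done, everything else is a short linear-algebra verification using the $\C^2$-module structure underlying $\mathcal{B}_H$ and the fundamental fields, and the reduction to the hypercomplex object $H\setminus0$ is already built into the setup via the diagram \eqref{diagram:for_Penrose} and Definition \ref{defn:q_h}.
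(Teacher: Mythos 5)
Your proposal is correct and follows essentially the same route as the paper's proof: decompose $\Cal_H=\mathcal{B}_H\oplus({\rm ker}\dif\!\p_H)^{0,1}$, and check that the identity \eqref{e:q_h_harmonic} holds automatically whenever at least one argument is a vertical fundamental field, using the bracket relations of Corollary \ref{cor:q_standard_invariance} together with the $\Hq^{\!*}$-equivariance of $\widetilde{f}$ (so that vertical derivatives are scalar multiples of $\widetilde{f}$) and the fact that $J$ sends $B(e_1)(s)$ to $B(e_2)(s)$. The bookkeeping you defer is exactly the computation carried out in the paper, and it does close up as you anticipate.
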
 
\begin{proof} 
The necessity of that condition for $f$ to be quaternionic pluriharmonic is trivial. For the sufficiency, firstly, note that 
$f$ is quaternionic pluriharmonic if and only if 
\begin{equation} \label{e:q_h_harmonic} 
X\bigl((JY)(\widetilde{f}\,)\bigr)-Y\bigl((JX)(\widetilde{f}\,)\bigr)-\bigl(J[X,Y]\bigr)(\widetilde{f}\,)=0\;, 
\end{equation} 
for any sections $X$ and $Y$ of $\Cal_H$\,.\\ 
\indent 
By writing $\widetilde{f}$ with respect to a local section of $H\setminus0$\,, a straightforward calculation shows that \eqref{e:q_h_harmonic} always holds for any 
$X$ and $Y$ sections of $({\rm ker}\dif\!\p_H)^{0,1}$.\\ 
\indent 
Note that, $({\rm ker}\dif\!\p_H)^{0,1}$ is generated by (the fundamental complex vector fields corresponding to) those $A\in\mathfrak{gl}(2,\C\!)$ 
for which the first column is zero. Take $X$ to be such an $A$\,, and $Y=B(e_1)(s)$ for some section $s$ of $E$. Then Corollary \ref{cor:q_standard_invariance} 
shows that $[X,Y]=0$\,. Further, $JX$ is given by the matrix whose first column is the opposite of the second column of $A$\,, whilst its second column is zero. 
Also, $JY=B(e_2)(s)$\,.\\ 
\indent 
On denoting by $\xi\bigl(\in\C^{\!2}\bigr)$ the second column of $A$ we have that $(JX)(\widetilde{f}\,)=\xi_1\widetilde{f}$. Hence, 
$Y\bigl((JX)(\widetilde{f}\,)\bigr)=\bigl(B(\xi_1e_1)(s)\bigr)(\widetilde{f}\,)$\,.\\ 
\indent 
Now, by applying, again, Corollary \ref{cor:q_standard_invariance}\,, we obtain that $[X,JY]=B(\xi)(s)$\,. Hence,  
\begin{equation*} 
\begin{split} 
X\bigl((JY)(\widetilde{f}\,)\bigr)&=JY\bigl((X)(\widetilde{f}\,)\bigr)+\bigl(B(\xi)(s)\bigr)(\widetilde{f}\,)\\ 
&=-\xi_2\bigl(B(e_2)(s)\bigr)(\widetilde{f}\,)+\bigl(B(\xi)(s)\bigr)(\widetilde{f}\,)=\bigl(B(\xi_1e_1)(s)\bigr)(\widetilde{f}\,)\;. 
\end{split} 
\end{equation*} 
\indent 
We have, thus, shown that \eqref{e:q_h_harmonic} is automatically satisfied, also, when $X$ is a section $({\rm ker}\dif\!\p_H)^{0,1}$  
and $Y$ is a section of $\mathcal{B}_H$\,. The proof is complete. 
\end{proof} 

\begin{rem} \label{rem:q_hyper_ph} 
A quick consequence of Proposition \ref{prop:q_h_harmonic} is that, for a hypercomplex object, the quaternionic pluriharmonic sheaf 
is the intersection of the hypercomplex pluriharmonic sheaves, with respect to all possible (anti-commuting) pairs $(I,J)$\,.\\ 
\indent  
Consequently, the following assertions are equivalent, for a hypercomplex object:\\ 
\indent 
\quad(i) The hypercomplex pluriharmonicity does not depend of the pair $(I,J)$\,.\\ 
\indent 
\quad(ii) The hypercomplex and the quaternionic pluriharmonicities coincide. 
\end{rem} 

\indent 
Next, we give the following: 

\begin{proof}[Proof of Theorem \ref{thm:q_Penrose}] 
To prove (i)\,, by passing, through pull back, to $H\setminus0$\,, we may suppose $M$ hypercomplex and, on it, the 
sheaf of quaternionic pluriharmonic sections of $\Lambda^2H$ is equal to the sheaf of hypercomplex pluriharmonic sections of $\Lambda^2H$. 
Then, by using \v Cech cohomology, integral formulae similar to \eqref{e:Penrose_prehistory} can be, locally, obtained 
for the sections of $\Lambda^2H$ which are in the image of the Penrose transform. Consequently, the image 
of the Penrose transform is contained in the space of quaternionic pluriharmonic sections of $\Lambda^2H$.\\ 
\indent 
To prove (ii)\,, we may complexify $M$ so that $H$ and (consequently) $Y$ are the restrictions to $M$ of holomorphic bundles over $M^{\C\!}$. 
For simplicity, we denote with the same $H$ and $Y$, respectively, these bundles over $M^{\C\!}$; also, the same for the notation of the corresponding 
bundle projections. We, thus, have the same diagram as \eqref{diagram:for_Penrose}\,, up to the fact that $M$ is replaced by $M^{\C\!}$. 
Furthermore, we have the following diagram (which, in part, is the complexification of \eqref{diagram:for_Penrose}\,),  
\begin{equation}  \label{diagram:for_Penrose_complex} 
\begin{gathered} 
\xymatrix{
                                                       &              {\rm GL}(H) \ar[dl]_{\psi_{H,j}} \ar[d]^p  \ar@/^/[ddr]^{\p_H}       &                  \\
    \mathcal{L}\setminus0  \ar[d]   &                     \hspace{3mm}(Y+Y)\setminus Y    \ar[dl]_{\psi_j} \ar[dr]^{\p}      &                  \\
                      Z                               &                                                                 &            M^{\C}                             
   } 
\end{gathered} 
\end{equation} 
where ${\rm GL}(H)$ is the frame bundle of $H$, $Y(=PH)$ is diagonnally embedded into $Y+Y$, whilst $\psi_{H,j}$ is the composition 
of $\psi_H$ with the projection onto the $j$-component of any frame on $H$, and similarly for $\psi_j$\,, $(j=1,2)$\,, and we have, also, denoted 
by $\p_H$ the projection from ${\rm GL}(H)$ onto $M^{\C\!}$.\\ 
\indent 
Let $\partial_j$ be the partial exterior differential determined by the holomorphic foliation $\Cal_j$ given by (the connected components of) the fibres 
of $\psi_{H,j}$\,, $(j=1,2)$\,. As ${\rm GL}(H)$ is the complexification of $(H\setminus0)|_M$\,, we obtain (through a complexification) 
a morphism of holomorphic vector bundles $J:\Cal_2^*\to\Cal_1^*$. 
Then a holomorphic section $f$ of $\Lambda^2H$ is (complex-)quaternionic pluriharmonic if and only if the corresponding equivariant holomorphic function 
$\widetilde{f}$ on ${\rm GL}(H)$ satisfies $(\partial_1\circ J\circ\partial_2)(\widetilde{f}\,)=0$\,.\\ 
\indent 
Note that, ${\rm GL}(H)$\,, as a bundle over $(Y+Y)\setminus Y$, is a principal bundle with group $\C^{\!*}\times\C^{\!*}$, where the action 
is induced, by restriction to the diagonal matrices, from the action of ${\rm GL}(2,\C\!)$\,. Furthermore, 
$\psi_{H,j}$ is a morphism of principal bundles, covering $\psi_j$\,, with respect to the morphism of Lie groups $\C^{\!*}\times\C^{\!*}\to\C^{\!*}$, 
$(\l_1,\l_2)\mapsto\l_j$\,, $(j=1,2)$\,.\\ 
\indent 
Let $f$ be a section of $\Lambda^2H$ and let $\widetilde{f}$ be the corresponding equivariant function on ${\rm GL}(H)$\,. 
Then, for any $(\l_1,\l_2)\in\C^{\!*}\times\C^{\!*}$, we have 
\begin{equation} \label{e:cocycle_invariance} 
R_{(\l_1,\l_2)}^*\bigl((J\circ\partial_2)(\widetilde{f}\,)\bigr)=\l_1^{-2}(J\circ\partial_2)(\widetilde{f}\,)\;, 
\end{equation}
where $R^*$ denotes the pull back transformation induced by the right translation. This is straightforward if we restrict to the fibres of $\p_H$\,. 
Further, the proof of Proposition \ref{prop:q_a_connection} works in this setting as well by passing, if necessary, to an open neighbourhood 
of each point of $M$. Consequently, Proposition \ref{prop:q_standard_invariance} can be easily adapted to this setting, and by using it 
\eqref{e:cocycle_invariance} quickly follows.\\ 
\indent 
Consequently, if $f$ is a quaternionic pluriharmonic section of $\Lambda^2H$ then the partial $1$-form $(J\circ\partial_2)(\widetilde{f}\,)$ 
defines a $1$-cocycle,  for the relative de Rham cohomology (see \cite{EGW}\,) of $\mathcal{L}^2$, with respect to $\psi_1$\,, whose first group we denote by 
$H_{\psi_1\!}^1\bigl(Z,\mathcal{L}^2\bigr)$\,. Consequently, we have obtained an injective linear map, into 
this cohomology group, from the space of quaternionic pluriharmonic sections of $\Lambda^2H$ (on integrating $(J\circ\partial_2)(\widetilde{f}\,)$ 
along the fibres of $\p_H$ we obtain $f$, up to a nonzero constant factor).\\ 
\indent 
Therefore (ii) is proved if, locally, 
we have $H^1\bigl(Z,\mathcal{L}^2\bigr)=H_{\psi_1\!}^1\bigl(Z,\mathcal{L}^2\bigr)$\,. From the main result of \cite{EGW}\,, we deduce that, locally, 
this holds if the fibres of $\psi$ are contractible. Now, if $(M,F,\r,c)$ is of constant type, we may assume that the 
images through $\p_H$ of the fibres of $\psi$ are totally geodesic with respect to a (classical) connection on $M^{\C\!}$. Thus, in this case, 
by passing to any convex neighbourhood over which $Y$ is trivial, we have the desired cohomology isomorphism, and the proof is complete. 
\end{proof} 

\begin{rem} 
From Proposition \ref{prop:qPt_inj} and Theorem \ref{thm:q_Penrose} we obtain that, locally, the Penrose transform is an isomorphism in the following cases:\\ 
\indent(1) for (classical) quaternionic manifolds \cite{Bas-q_complexes} (see, also, \cite{Hit-80}\,, for the particular case of anti-self-dual manifolds).\\ 
\indent(2) for co-CR quaternionic vector spaces (this was, essentially, proved in \cite{Tsai}\,). 
\end{rem}

\end{document}